\newtheorem*{theorem}{Theorem}
\newtheorem*{remark}{Remark}
\newtheorem*{example}{Example}
\newtheorem*{ex}{Example}
\newtheorem*{proposition*}{Proposition}
\newtheorem*{definition*}{Definition}
\newtheorem{prop}{Proposition}
\newtheorem{lemma}{Lemma}
\newtheorem{proposition}{Proposition}
\DeclareMathOperator{\sech}{sech}
\title{The joy of multisection:  elementary approaches to Ramanujan's  lacunary identities for Bernoulli numbers}
\author[1]{Parth Chavan}
\author[2]{Christophe Vignat}
\affil[1]{\'Ecole Polytechnique, Palaiseau, France}
\affil[2]{Department of Mathematics, Tulane University, New Orleans,  USA}
\affil[2]{L2S, CentraleSupelec, Université Paris-Saclay, France}
\date{}
\newcommand{\subjclass}[2][2020]{%
  \let\@oldtitle\@title%
  \gdef\@title{\@oldtitle\footnotetext{#1 \emph{Mathematics subject classification.} #2}}%
}
\newcommand{\keywords}[1]{%
  \let\@@oldtitle\@title%
\gdef\@title{\@@oldtitle\footnotetext{\emph{Key words:} #1.}}%
}
\subjclass{30B10,11M36,11B68}
\keywords{Ramanujan's lacunary identity, Bernoulli numbers, multisection, Eisenstein series}
\begin{document}

\maketitle
\begin{abstract}
An identity by  Ramanujan related to the multisection of Bernoulli numbers is revisited. Two alternative approaches are proposed, both relying on the multisection technique. A geometric approach reveals the role played by the symmetries of the summation domain in the complex plane induced by the multisection technique. The second approach, based on generating functions, allows us to extend Ramanujan's identity to other special functions such as Eisenstein's series.
\end{abstract}

\section{Introduction}
The five volumes  edition of Ramanujan's Notebooks, together  with the additional five volumes of Ramanujan's Lost Notebooks, include many puzzling entries. Here is one of them. Entry 33 in Chapter 23 of the fourth volume of Ramanujan's Notebooks \cite{Berndt} states:
\begin{proposition*}
Let $a_n$ be defined by 
\[
a_n = (2+\sqrt{3})^n+(2-\sqrt{3})^n+2^n,
\]
and let $B_n$ denote the $n-$th Bernouli number, where $n \ge 0.$ Then, for $\vert  x \vert <2\pi,$
\begin{equation}
\label{convolution}
6 \sum_{j\ge 0}B_{12j}\frac{x^{12j}}{(12j)!}
\sum_{k\ge  0}(-1)^k a_{6k+3}\frac{x^{12k+6}}{(12k+6)!}
=
\sum_{k\ge  0}(-1)^k a_{6k+3}\frac{x^{12k+6}}{(12k+5)!}.
\end{equation}
\end{proposition*}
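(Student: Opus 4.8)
The plan is to recast the stated identity as a first-order differential equation and then to solve it in closed form on both sides by multisection. Writing $B(x)=\sum_{j\ge0}B_{12j}x^{12j}/(12j)!$ and $S(x)=\sum_{k\ge0}(-1)^k a_{6k+3}x^{12k+6}/(12k+6)!$ for the two factors on the left of \eqref{convolution}, the first observation is that the right-hand side is exactly $xS'(x)$: indeed $\frac{1}{(12k+5)!}=\frac{12k+6}{(12k+6)!}$ and $x\frac{d}{dx}\frac{x^{12k+6}}{(12k+6)!}=(12k+6)\frac{x^{12k+6}}{(12k+6)!}$. Hence \eqref{convolution} is equivalent to $6B(x)S(x)=xS'(x)$, that is,
$$\frac{d}{dx}\log S(x)=\frac{6B(x)}{x}.$$
So it suffices to compute $6B/x$ in closed form and to recognize it as a logarithmic derivative.

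Second, I would multisect the Bernoulli generating function. With $\omega=e^{\pi i/6}$ one has $B(x)=\frac{1}{12}\sum_{r=0}^{11}\frac{\omega^r x}{e^{\omega^r x}-1}$; substituting $\frac{z}{e^z-1}=\frac{z}{2}\coth\frac{z}{2}-\frac{z}{2}$ and using $\sum_{r=0}^{11}\omega^r=0$ together with the symmetry $\omega^{r+6}=-\omega^r$ collapses this to
$$\frac{6B(x)}{x}=\frac12\sum_{r=0}^{5}\omega^r\coth\frac{\omega^r x}{2}=\frac{d}{dx}\log P(x),\qquad P(x):=\prod_{r=0}^{5}\sinh\frac{\omega^r x}{2}.$$
Thus the identity reduces to the single assertion $\frac{d}{dx}\log S=\frac{d}{dx}\log P$, equivalently $S=cP$ for a constant $c$; comparing the leading terms $S(x)=\frac{x^6}{12}+\cdots$ and $P(x)=\frac{i}{64}x^6+\cdots$ fixes $c=-16i/3$ once proportionality is known.

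Third, and this is the real work, I would establish $S=cP$ by multisecting the exponential generating function of the $a_n$. Since $a_n=\alpha^{2n}+\beta^{2n}+\gamma^{2n}$ with $\alpha^2=2+\sqrt3$, $\beta^2=2-\sqrt3$, $\gamma^2=2$, one has $\sum_n a_n x^{2n}/(2n)!=\cosh\alpha x+\cosh\beta x+\cosh\gamma x$, and extracting the residues $n\equiv3,9\pmod{12}$ with the sign $(-1)^{(n-3)/6}$ expresses $S$ as a fixed linear combination of the functions $\cosh(\lambda\eta^r x)$ with $\eta=e^{i\pi/12}$. The matching with $P$ hinges on the arithmetic of these frequencies: the six numbers $\pm\alpha,\pm\beta,\pm\gamma$ are precisely $2\cos\frac{(2j+1)\pi}{12}$, so that $\alpha=\eta+\eta^{-1}$, $\gamma=\eta^3+\eta^{-3}$, $\beta=\eta^5+\eta^{-5}$, and the arguments appearing in $S$ are sums of two $24$th roots of unity. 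Expanding $P=2^{-6}\prod_{r=0}^5(e^{\omega^r x/2}-e^{-\omega^r x/2})$ into exponentials and regrouping by these same frequencies, I expect the two power series to coincide term by term.

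The main obstacle is exactly this last matching: while the reduction to $6BS=xS'$ and the evaluation of $6B/x$ are short, proving $S=cP$ requires carefully tracking the $24$th-root-of-unity bookkeeping so that the eighteen $\cosh$-terms produced by the multisection of $\cosh\alpha x+\cosh\beta x+\cosh\gamma x$ reassemble into the six-fold $\sinh$-product. A useful guide is the un-multisected prototype $6\,\frac{x}{e^x-1}(1-e^{-x})^6=x\frac{d}{dx}(1-e^{-x})^6$, whose solution $(1-e^{-x})^6=2^6 e^{-3x}\sinh^6\frac{x}{2}$ is the exact analogue of $P$ with the single factor $\sinh\frac{x}{2}$ rotated into the six directions $\omega^r$; it both explains why a product should appear and serves as a check that the logarithmic-derivative mechanism is the right one.
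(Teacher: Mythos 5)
Your proposal is correct and is essentially the paper's own generating-function proof: the reformulation of \eqref{convolution} as $6\psi(z)f(z)=zf'(z)$, the multisection of the Bernoulli generating function, and the identification of the solution as $-\tfrac{16}{3}\imath\prod_{j=0}^{5}\sinh\left(\tfrac{\omega^{j}z}{2}\right)$ (your constant $c=-16\imath/3$ matching the paper's normalization $f(z)=\tfrac{z^{6}}{12}+o(z^{6})$) all appear there in the same order. The one step you leave as a sketch --- verifying that $\sum_{k\ge0}(-1)^{k}a_{6k+3}\frac{z^{12k+6}}{(12k+6)!}$ equals that sinh-product via the $24$th-root-of-unity bookkeeping --- is precisely the step the paper dismisses as ``easily checked'' (and carries out elsewhere via its geometric lattice argument and the multiple-zeta-value evaluation), so your account is, if anything, more candid about where the real work lies.
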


As noticed in \cite{Berndt}, identifying the coefficient of $x^{12n+6}$ in the Taylor expansion of both sides allows us to rephrase this identity as follows:
\begin{equation}
\label{finite}
6\sum_{k = 0}^n \frac{(-1)^k a_{6k+3}}{(12k+6)!}\frac{B_{12n-12k}}{(12n-12k)!}  = \frac{(-1)^n a_{6n+3}}{(12n+5)!},\,\,n\ge 0 .
\end{equation}

Let us recall that the Bernoulli polynomials are defined by the exponential generating function
\begin{align}
\label{Bernoulli}
    \frac{x e^{xz}}{e^x-1} = \sum_{n=0}^{\infty} B_n(z) \frac{x^{n}}{n!}, \quad |x|< 2\pi,
\end{align}
and the Bernoulli numbers are their values at $z=0,$
\[
B_n = B_n(0).
\]
The intriguing fact about identity \eqref{convolution} is that the  Bernoulli generating function on the left-hand side  vanishes after multiplication by the generating function of the sequence $\{a_n\}$ that reappears, barely unchanged, on the right hand side: it looks like that the latter has absorbed the former.

Identity \eqref{finite} is a lacunary identity: it allows to compute Bernoulli numbers using a recursion with gaps of length 12.
As for every other entry in the Notebooks, a proof and some context for this entry are produced by B. Berndt in \cite{Berndt}. The proof uses a general result by Lehmer \cite{Lehmer} based  on the properties of a certain recursive sequence. 

Our main goal here is to provide two other angles under which this identity can be approached:
we propose two different proofs of identity \eqref{convolution} that bypass Lehmer's result and produce some intuition on the mechanism underlying Ramanujan's identity. 

The first approach, the geometric approach, is based on the interpretation of series such as in  \eqref{convolution}
as sums over lattices in the complex plane. Studying these lattices allows a  better understanding of  \eqref{convolution}. The second approach uses generating functions: as usual with generating functions, the loss of combinatorial understanding  is compensated by a  capability of generalization that allows us to produce new extensions of Ramanujan's lacunary identity, for example to the case of Eisenstein series.

The following well-known multisection  formula \cite[Ch.4]{Riordan}  will be repeatedly used in the sequel :
\begin{proposition*}
For an analytic function $f(z)=\sum_{n \ge 0}f_n z^n$, with $p,q$ integers such that  $0\le p<q$ and   $\omega=e^{\imath \frac{2 \pi}{q}},$
\begin{equation}
\label{general multisection}
\frac{1}{q}\sum_{k=0}^{q-1} \frac{1}{\omega^{kp}} f\left(z\omega^k\right) = \sum_{n=0}^{\infty} f_{qn+p} z^{qn+p}.
\end{equation}
\end{proposition*}
This multisection formula is well-known and has multiple applications, for example in combinatorics. Let us quote here Riordan \cite{Riordan} refering to Lehmer \cite{Lehmer}:
\begin{quote}
\texttt{Multisection is not limited to ordinary power series, of course. Perhaps the most interesting illustration of its use with other series is in the determination of lacunary recurrence formulas for the Bernoulli numbers, the history of which is recounted in the paper mentioned in the introduction by Lehmer (1935), who, however, did not use the simple procedure of multisection.} 
\end{quote}
\section{The geometric approach}

\subsection{First step: enter the Bernoulli polynomials}
The following identity is the first step toward a geometric interpretation and a generalization of  identity \eqref{finite}.
\begin{proposition}
\label{Prop_main}
With $p,q$ integers such that $0\le p <q,$ with $\omega = e^{\imath \frac{\pi}{q}}$ and $z\in \mathbb{C},$
\[
\sum_{k=0}^n (-1)^k \binom{2qn+p+q}{2qk+q} z^{2qk+p} B_{2qn+p-2qk}
= \frac{\imath}{2q}\sum_{j=0}^{2q-1}(-1)^j
B_{2qn+p+q}\left(\frac{z}{\omega^{j+\frac{1}{2}}}\right).
\]
\end{proposition}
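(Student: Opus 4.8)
The plan is to run the identity backwards: the right-hand side is, after a harmless rescaling of the variable, a multisection of the single polynomial $B_{N}$ with $N=2qn+p+q$, and the formula \eqref{general multisection} will turn this multisection into the convolution sum on the left. The key structural fact is that $\omega=e^{\imath\pi/q}$ is a primitive $2q$-th root of unity, so \eqref{general multisection} applies with $q$ replaced by $2q$. Before invoking it I would record two bookkeeping identities that govern all the phases: since $\omega^{q}=e^{\imath\pi}=-1$ we have $(-1)^{j}=\omega^{qj}=\omega^{-qj}$, and the half-integer shift in the argument is absorbed by setting $\tilde z=z\,\omega^{-1/2}$, so that $z/\omega^{j+1/2}=\tilde z\,\omega^{-j}$.

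With these two reductions the right-hand side becomes $\tfrac{\imath}{2q}\sum_{j=0}^{2q-1}\omega^{-qj}B_{N}(\tilde z\,\omega^{-j})$. Reindexing $j\mapsto -j$ over a complete residue system modulo $2q$ leaves $\omega^{-qj}$ unchanged and replaces $\omega^{-j}$ by $\omega^{j}$, so the sum now matches the left-hand side of \eqref{general multisection} with $f=B_{N}$, period $2q$, and offset $q$. Applying the multisection formula extracts exactly those monomials of $B_{N}$ whose degree is $\equiv q\pmod{2q}$; equivalently, substituting the expansion $B_{N}(y)=\sum_{m=0}^{N}\binom{N}{m}B_{m}y^{N-m}$ and summing the resulting geometric series in $j$ shows, by orthogonality of the $2q$-th roots of unity, that only the Bernoulli indices $m\equiv p\pmod{2q}$, say $m=2q(n-k)+p$ with $0\le k\le n$, survive. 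Each survivor comes with the coefficient $\binom{N}{2q(n-k)+p}=\binom{N}{2qk+q}$ and the Bernoulli number $B_{2qn+p-2qk}$.

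The step demanding real care---and the one I expect to be the crux---is tracking the phase factors generated by the half-integer exponent. The surviving term carries $\tilde z^{2qk+q}$, and unfolding the substitution gives $\tilde z^{2qk+q}=z^{2qk+q}\,\omega^{-qk}\,\omega^{-q/2}=(-1)^{k}(-\imath)\,z^{2qk+q}$, where I use $\omega^{-qk}=(-1)^{k}$ and $\omega^{-q/2}=e^{-\imath\pi/2}=-\imath$. The factor $-\imath$ cancels the prefactor $\imath$, so the coefficients become real: the term reduces to $(-1)^{k}\binom{N}{2qk+q}B_{2qn+p-2qk}\,z^{2qk+q}$, whose power of $z$ is $2qk+q$, in agreement with the lower index of the binomial coefficient. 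Summing over $k$ reproduces the left-hand side term by term, completing the proof. The only thing to monitor throughout is that every occurrence of $\omega^{\pm q}$ and $\omega^{\pm q/2}$ is resolved consistently, since it is precisely this interplay that converts the complex multisection into an identity with rational coefficients.
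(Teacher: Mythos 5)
Your proposal is correct and is essentially the paper's own proof run in reverse: the paper starts from the left-hand side, reindexes $k\mapsto n-k$, and recognizes the resulting sum as a $2q$-multisection of the umbral expansion $(1+yB)^{2qn+p+q}$, whereas you start from the right-hand side and apply the same formula \eqref{general multisection} (with modulus $2q$ and offset $q$) directly to the polynomial $B_{2qn+p+q}(y)$, with identical phase bookkeeping ($\omega^{-qk}=(-1)^k$, $\omega^{-q/2}=-\imath$). One remark: what you prove has $z^{2qk+q}$ on the left-hand side, and this is indeed what the paper's own proof establishes and what its later specializations use ($z^{4k+2}$ for $q=2,p=0$ and $z^{12k+6}$ for $q=6,p=0$), so the exponent $z^{2qk+p}$ in the printed statement of Proposition \ref{Prop_main} is a misprint rather than a defect of your argument.
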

\begin{proof}
Consider the series 
\[
\sum_{k=0}^n (-1)^k \binom{2qn+p+q}{2qk+q} z^{2qk+q} B_{2qn+p-2qk}
\]
and change the  summation index $k$ to $n-k$ to  obtain
\[
(-1)^n \sum_{k=0}^n (-1)^k \binom{2qn+p+q}{2qk+p} z^{2qn-2qk+q} B_{2qk+p}
=(-1)^n z^{2qn+q}\sum_{k=0}^n \omega^{qk} \binom{2qn+p+q}{2qk+p} z^{-2qk} B_{2qk+p}
\]
\[
=(-1)^n \frac{z^{2qn+p+q}}{\omega^{\frac{p}{2}}}\sum_{k=0}^n (\frac{\sqrt{\omega}}{z})^{2qk+p} \binom{2qn+p+q}{2qk+p}  B_{2qk+p}
\]
The sum is recognized as a $2q-$ multisection and we deduce, using \eqref{general multisection} and the  symbolic notation $B_n(x)=(x+B)^n,$ that it is equal to
\[
(-1)^n \frac{z^{2qn+p+q}}{\omega^{\frac{p}{2}}}\frac{1}{2q}\sum_{j=0}^{2q-1}\frac{1}{\omega^{pj}} (1+\omega^j \frac{\sqrt{\omega}}{z}B)^{2qn+p+q}
=(-1)^n \frac{1}{2q}\sum_{j=0}^{2q-1}\omega^{(j+\frac{1}{2})(2qn+p+q)}
B_{2qn+p+q}(\frac{z}{\omega^{j+\frac{1}{2}}}).
\]
The final result is deduced  from the elementary computation
\[
\omega^{(j+\frac{1}{2})(2qn+p+q)} = \imath (-1)^{n+j}.
\]
\end{proof}
\subsection{Second step: enter the lattices}
The fact that Bernoulli numbers vanish after summation in \eqref{finite} is a consequence of the well-known shift property of the Bernoulli polynomials
\begin{equation}
\label{shift_Bernoulli}
B_n(z+1) = B_n(z) +nz^{n-1},\,\,n\ge0.    
\end{equation}

As a consequence, any  sum that involves pairwise differences of Bernoulli polynomials evaluated at points horizontally separated by one or several unit lengths in the complex plane will result in  a sum of monomials only. This justifies the introduction of the following definition.

\begin{definition*}
A  lattice of $2N$ points  in  the complex plane is   \texttt{integral} if it is made of $N$ pairs of distinct points $\left\{z_i,z_j \right\}$ such as $z_i$ and $z_j$ are horizontally separated by a non-zero integer distance.
\end{definition*}
Notice that we use here the term lattice to designate a finite set of points in the complex plane rather than, as  is more usual, a repeating arrangement of points.

For example, the lattice of four points $\left\{\pm  1 \pm \imath\right\}$ is \texttt{integral} since in both couples of points $\left\{-1-\imath,1-\imath\right\}$ and $\left\{-1+\imath,1+\imath\right\},$ the points are horizontally separated by a distance $D=2.$ 

As another example, with $\omega=e^{\imath\frac{\pi}{6}},$ the lattice  made of the 36 points $\left\{\omega^{11j}+\omega^{11j+11}\right\}$,
$\left\{\omega^{11j+1}+\omega^{11j+10}\right\}$ and $\left\{\omega^{11j+2}+\omega^{11j+9}\right\}$ for $0\le j\le11,$ is an integral lattice, as will be seen in Subsection \ref{233}.
\subsection{Examples}
This subsection introduces some examples of Ramanujan-type series associated with integral lattices.
\subsubsection{the 2-interval formula}
\begin{figure}[h]
\centering
\includegraphics[scale=0.5]{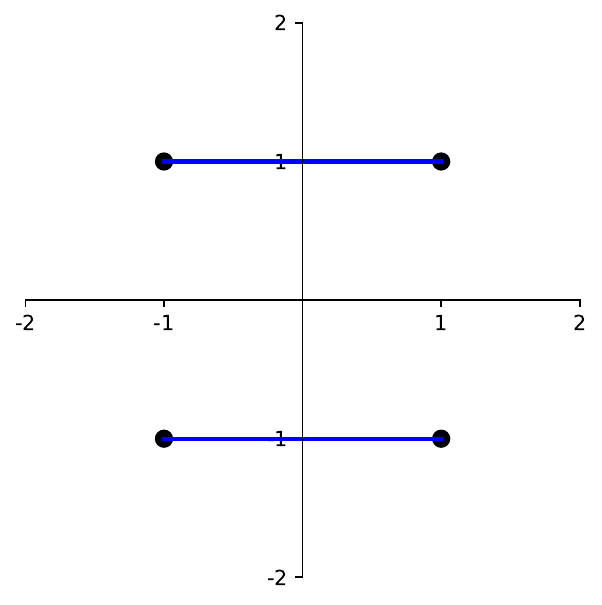} 
\caption{The 2-interval lattice.}
\label{Fig1}
\end{figure}

\begin{proposition*}
    We have the identity for $n\ge 0,$
\[    
\sum_{k=0}^{n}\left(-1\right)^{k}\binom{4n+2}{4k+2} 2^{2k+1}B_{4n-4k} = (-1)^n(2n+1)(2^{2n}+ (-1)^n).
\]
\end{proposition*}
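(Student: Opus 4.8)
The plan is to specialize Proposition~\ref{Prop_main} to the parameters $q=2$, $p=0$ and $z=\sqrt{2}$, and then to reduce the resulting alternating sum of Bernoulli polynomials to the elementary right-hand side by means of the shift property~\eqref{shift_Bernoulli}. First I would fix the parameters: for $q=2$ one has $\omega=e^{\imath\pi/2}=\imath$, the order $2qn+p+q$ becomes $4n+2$, the binomial $\binom{2qn+p+q}{2qk+q}$ becomes $\binom{4n+2}{4k+2}$, and $B_{2qn+p-2qk}=B_{4n-4k}$. With $z=\sqrt{2}$ the monomial $z^{2qk+q}=z^{4k+2}$ equals $2^{2k+1}$, so the left-hand side is exactly $\sum_{k=0}^{n}(-1)^{k}\binom{4n+2}{4k+2}2^{2k+1}B_{4n-4k}$, the quantity we must evaluate.

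It then remains to compute the right-hand side $\tfrac{\imath}{4}\sum_{j=0}^{3}(-1)^{j}B_{4n+2}\!\left(z/\omega^{j+1/2}\right)$. The decisive observation is geometric: for $z=\sqrt{2}$ and $\omega=\imath$ the four arguments $z/\omega^{j+1/2}$ are precisely the points $1-\imath,\,-1-\imath,\,-1+\imath,\,1+\imath$, that is, the integral lattice $\{\pm1\pm\imath\}$ used as the first example after the definition of integral lattices. I would pair the two points sharing a common imaginary part, namely $\{1-\imath,-1-\imath\}$ and $\{-1+\imath,1+\imath\}$: within each pair the points are horizontally separated by the integer distance $2$, and the alternating signs $(-1)^{j}$ are arranged so that each pair contributes a difference of the form $\pm\big(B_{4n+2}(w+2)-B_{4n+2}(w)\big)$.

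Applying~\eqref{shift_Bernoulli} twice yields $B_{N}(w+2)-B_{N}(w)=N\big(w^{N-1}+(w+1)^{N-1}\big)$ with $N=4n+2$, which turns each pair into a sum of two monomials. After this step the whole right-hand side becomes an explicit combination of $(-1-\imath)^{4n+1}$, $(-\imath)^{4n+1}$, $(-1+\imath)^{4n+1}$ and $\imath^{4n+1}$. These powers are read off from $(\pm1\pm\imath)^{4}=-4$ and $(\pm\imath)^{4}=1$, giving $(-1\mp\imath)^{4n+1}=(-4)^{n}(-1\mp\imath)$ as well as $\imath^{4n+1}=\imath$ and $(-\imath)^{4n+1}=-\imath$. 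Collecting the terms, the $(-4)^{n}=(-1)^{n}2^{2n}$ contributions leave a factor $-2\imath\big((-1)^{n}2^{2n}+1\big)$; multiplying by the prefactor $\tfrac{\imath}{4}$ and by $N=4n+2$ turns $\tfrac{\imath}{4}\cdot(-2\imath)\cdot(4n+2)$ into $2n+1$, so the right-hand side collapses to $(2n+1)\big((-1)^{n}2^{2n}+1\big)=(-1)^{n}(2n+1)(2^{2n}+(-1)^{n})$, as desired.

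I expect the only real obstacle to be bookkeeping rather than ideas: one must match the correct pair of lattice points to each application of the shift, and keep track simultaneously of the signs $(-1)^{j}$ and of the chosen branch of $\omega^{j+1/2}$, so that every difference $B_{N}(w+2)-B_{N}(w)$ appears with the right orientation. Once the pairing is made to respect the integral-lattice structure, the evaluation of the four powers by De Moivre's formula and the cancellation of the imaginary units against the prefactor $\tfrac{\imath}{4}$ are entirely routine.
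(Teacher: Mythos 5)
Your proof is correct and takes essentially the same route as the paper: specialize Proposition \ref{Prop_main} to $q=2$, $p=0$, $z=\sqrt{2}$, recognize the four evaluation points as the integral lattice $\{\pm 1\pm\imath\}$, pair the points sharing an imaginary part, and reduce via the shift property \eqref{shift_Bernoulli} to powers of $\pm1\pm\imath$ and $\pm\imath$. The only cosmetic difference is bookkeeping: you write $(-4)^{n}$ where the paper writes $(2\imath)^{2n}$, which are of course equal.
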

\begin{proof}
Using Proposition \ref{Prop_main}, the series on the left-hand side can be written as 
\begin{align}
\label{2-int}
\frac{i}{4} \left[B_{4n+2}\left(\frac{z}{\imath^{1/2}}\right) - B_{4n+2}\left(\frac{z}{\imath^{3/2}}\right)+ B_{4n+2}\left(\frac{z}{\imath^{5/2}}\right) - B_{4n+2}\left(\frac{z}
{i^{7/2}}\right)\right].
\end{align}
Now notice that $i^{\frac{1}{2}}(1-i) = \sqrt{2}$. Thus, substituting $z= \sqrt{2}$ and using \eqref{shift_Bernoulli},  the sum simplifies to a sum over the integral lattice shown in Figure \ref{Fig1},  equal to
\begin{align*}
    &\frac{i}{4} \left[B_{4n+2}\left(1-i\right) - B_{4n+2}\left(-1-i\right)+ B_{4n+2}\left(-1+i\right) - B_{4n+2}\left(1+i\right)\right] \\
    &= \frac{i}{4}(4n+2) \left[(-1-i)^{4n+1}+(-i)^{4n+1}-(i-1)^{4n+1} -(i)^{4n+1}\right] \\
    &= (2n+1)(1 + (2i)^{2n}) = (-1)^n(2n+1)(2^{2n}+ (-1)^n).
\end{align*}

\end{proof}
\begin{remark}
Any other value of the parameter $z=\frac{q}{\sqrt{2}},\,\,q \in \mathbb{Z}^*,$  retains the property of the lattice being integral and  can be chosen in \eqref{2-int}. For  example, the choice $z=\frac{1}{\sqrt{2}}$ produces the identity
\[
\sum_{k=0}^{n}\left(-1\right)^{k}\binom{4n+2}{4k+2} \frac{1}{2^{2k+1}}B_{4n-4k} = 
(-1)^{n+1}\frac{2n+1}{2^{2n+1}},
\]
while the choice $z=\frac{3}{\sqrt{2}}$ produces 
\begin{equation}
\sum_{k=0}^{n}
(-1)^k\binom{4n+2}{\,4k+2\,}
\left(\frac{9}{2}\right)^{\!2k+1}
B_{4n-4k} = 
(2n+1)\left(\sqrt{10}\left(\frac{5}{2}\right)^{\!2n}
\sin\!\big((4n+1)\arctan 3\big)
+\frac{(-1)^n\,3^{4n+1}}{2^{2n+1}}\right).
\end{equation}
Details are left to the reader. Also notice that 
\[
\sqrt{10}\sin{\left((4n+1)\arctan{3}\right)}=10^{-2n}Im(1+3\imath)^{4n+1}.
\]
\end{remark}

The previous result extends easily to any family of polynomials $\{P_n(z)\}$ such that $P_n(z+1)-P_n(z)$ can be simplified. For example, for $a\in\mathbb{R},\,\,n\in \mathbb{N},$
the N\"{o}rlund  polynomials $\{B_n^{(a)}(x)\},$ defined through the exponential generating function 
    \begin{align*}
        \left(\frac{t}{e^{t}-1}\right)^a e^{xt} = \sum_{k=0}^{\infty} B_{n}^{(a)}(x) \frac{t^n}{n!},
    \end{align*}
    satisfy the identity
    \begin{equation}
    \label{Norlund}
        B_{n}^{(a)}(x+1) - B_{n}^{(a)}(x) = n B_{n-1}^{(a-1)}(x)
    \end{equation} 
    as a consequence of 
        \begin{align*}
         \left(\frac{t}{e^{t}-1}\right)^a e^{(x+1)t} -  \left(\frac{t}{e^{t}-1}\right)^a e^{xt} = t  \left(\frac{t}{e^{t}-1}\right)^{a-1} e^{xt}.
    \end{align*}
    In the case $a=1,$ the N\"{o}rlund polynomials coincide with the Bernoulli polynomials. 
    
    As a consequence, we have the following result.
\begin{proposition*}
    The N\"{o}rlund polynomials satisfy, for all $a\in\mathbb{R},$
    \[
    \sum_{k=0}^{n}\left(-1\right)^{k}\binom{4n+2}{4k+2}z^{4k+2}B_{4n-4k}^{(a)}
    = \frac{i}{4} (4n+2) \left[B_{4n+1}^{(a-1)}\left(-1-i\right) + B_{4n+1}^{(a-1)}\left(-i\right)- B_{4n+1}^{(a-1)}\left(i-1\right) - B_{4n+1}^{(a-1)}\left(i\right)\right].
    \]
    The specialization $a=2$ produces
    \begin{align*}
        \sum_{k=0}^{n}\left(-1\right)^{k}\binom{4n+2}{4k+2}2^{2k+1}B_{4n-4k}^{(2)} &= \frac{i(4n+2)}{4}  \left[B_{4n+1}\left(-1-i\right) + B_{4n+1}\left(-i\right)- B_{4n+1}\left(i-1\right) - B_{4n+1}\left(i\right)\right].
    \end{align*}    
\end{proposition*}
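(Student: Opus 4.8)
The plan is to follow the two steps of the geometric approach verbatim, replacing the Bernoulli numbers $B_n$ by the Nörlund numbers $B_n^{(a)}$ throughout, and to substitute the simple monomial shift of the Bernoulli case by its Nörlund counterpart \eqref{Norlund} only at the very last step. The key observation that makes this transcription legitimate is that the proof of Proposition \ref{Prop_main} never used any property of the Bernoulli numbers beyond the umbral (binomial) structure $B_n(x)=(x+B)^n$, i.e. $B_n(x)=\sum_k\binom{n}{k}B_k x^{n-k}$. Reading off the generating function by expanding $e^{xt}\sum_k B_k^{(a)}t^k/k!$ shows that the Nörlund polynomials obey the same structure, $B_n^{(a)}(x)=(x+B^{(a)})^n$ with $(B^{(a)})^k\mapsto B_k^{(a)}$, so every line of that proof applies unchanged.

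First I would record the Nörlund analogue of Proposition \ref{Prop_main}: for $0\le p<q$, $\omega=e^{\imath\pi/q}$ and $z\in\mathbb{C}$,
\[
\sum_{k=0}^n (-1)^k \binom{2qn+p+q}{2qk+q} z^{2qk+q} B_{2qn+p-2qk}^{(a)}
= \frac{\imath}{2q}\sum_{j=0}^{2q-1}(-1)^j B_{2qn+p+q}^{(a)}\!\left(\frac{z}{\omega^{j+\frac12}}\right).
\]
This follows from the identical index flip $k\mapsto n-k$, the factorization of $z^{2qn+q}$, the recognition of the inner sum as a $2q$-multisection of $(1+B^{(a)}u)^{2qn+p+q}$ with $u=\sqrt{\omega}/z$, an application of \eqref{general multisection}, and the same evaluation $\omega^{(j+\frac12)(2qn+p+q)}=\imath(-1)^{n+j}$. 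Next I would specialise $q=2$, $p=0$, so that $\omega=\imath$ and $2qn+p+q=4n+2$, and substitute $z=\sqrt2$ (which turns $z^{4k+2}$ into $2^{2k+1}$). Using $\imath^{1/2}(1-\imath)=\sqrt2$ exactly as in the 2-interval formula, the four arguments $z/\imath^{\,j+1/2}$, $j=0,1,2,3$, become $1-\imath$, $-1-\imath$, $-1+\imath$ and $1+\imath$, so the right-hand side reads
\[
\tfrac{\imath}{4}\left[B_{4n+2}^{(a)}(1-\imath)-B_{4n+2}^{(a)}(-1-\imath)+B_{4n+2}^{(a)}(-1+\imath)-B_{4n+2}^{(a)}(1+\imath)\right].
\]

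Finally I would collapse this into two differences of polynomials evaluated at points separated by the horizontal integer distance $2$, namely $\{-1-\imath,\,1-\imath\}$ and $\{-1+\imath,\,1+\imath\}$, and apply the Nörlund shift \eqref{Norlund} twice, in the form $B_n^{(a)}(x+2)-B_n^{(a)}(x)=n\big(B_{n-1}^{(a-1)}(x+1)+B_{n-1}^{(a-1)}(x)\big)$. With $x=-1-\imath$ in the first pair and $x=-1+\imath$ (entering with an overall minus sign, since $-1+\imath=(1+\imath)-2$) in the second, the exponent-$a$ polynomials turn into exponent-$(a-1)$ polynomials evaluated at $-1-\imath,-\imath,\imath-1,\imath$, which is precisely the claimed right-hand side. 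The specialisation $a=2$ is then immediate from $B_n^{(1)}=B_n$, turning the four $B_{4n+1}^{(a-1)}$ into ordinary Bernoulli polynomials $B_{4n+1}$.

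I expect no genuine obstacle: the conceptual content is entirely inherited from Proposition \ref{Prop_main}, with \eqref{Norlund} supplying, for exponent $a$, the role played by $B_n(x+1)-B_n(x)=nx^{n-1}$ in the Bernoulli case $a=1$. The only points requiring care are the sign bookkeeping when pairing the four terms (the second pair has reversed orientation) and the verification that the Nörlund polynomials genuinely satisfy the umbral identity that powered the multisection; both are routine.
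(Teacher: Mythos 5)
Your proposal is correct and follows essentially the same route as the paper: apply the N\"{o}rlund analogue of Proposition \ref{Prop_main} (which the paper invokes implicitly, justified by the remark that the multisection argument only needs the umbral structure $B_n^{(a)}(x)=\sum_k\binom{n}{k}B_k^{(a)}x^{n-k}$), specialize $q=2$, $p=0$, $z=\sqrt{2}$ to land on the integral lattice $\{\pm1\pm\imath\}$, and telescope with the shift identity \eqref{Norlund}. Your write-up is in fact slightly more explicit than the paper's, which states the conclusion directly after citing Proposition \ref{Prop_main} and \eqref{Norlund}; the sign bookkeeping and the twice-applied shift $B_n^{(a)}(x+2)-B_n^{(a)}(x)=n\bigl(B_{n-1}^{(a-1)}(x+1)+B_{n-1}^{(a-1)}(x)\bigr)$ match the paper's computation exactly.
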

\begin{proof}
Using Proposition \ref{Prop_main}, we have 
    \begin{align*}
        \sum_{k=0}^{n}\left(-1\right)^{k}\binom{4n+2}{4k+2}z^{4k+2}B_{4n-4k}^{(a)} &=  \frac{i}{4} \left[B_{4n+2}^{(a)}\left(\frac{z}{\imath^{\frac{1}{2}}}\right) - B_{4n+2}^{(a)}\left(\frac{z}{\imath^{3/2}}\right)+ B_{4n+2}^{(a)}\left(\frac{z}{\imath^{5/2}}\right) - B_{4n+2}^{(a)}\left(\frac{z}
{i^{7/2}}\right)\right].
    \end{align*}
    With $z = \sqrt{2}$, the right-hand side is equal to 
    \begin{align*}
        &\frac{i}{4} \left[B_{4n+2}^{(a)}\left(1-i\right) - B_{4n+2}^{(a)}\left(-1-i\right)+ B_{4n+2}^{(a)}\left(-1+i\right) - B_{4n+2}^{(a)}\left(1+i\right)\right] \\
        &= \frac{i}{4} (4n+2) \left[B_{4n+1}^{(a-1)}\left(-1-i\right) + B_{4n+1}^{(a-1)}\left(-i\right)- B_{4n+1}^{(a-1)}\left(i-1\right) - B_{4n+1}^{(a-1)}\left(i\right)\right],
    \end{align*}
    where we have used \eqref{Norlund} to deduce the last expression.
\end{proof}

\subsubsection{the 4-interval formula}
\begin{figure}[h]
\centering
\includegraphics[scale=0.6]{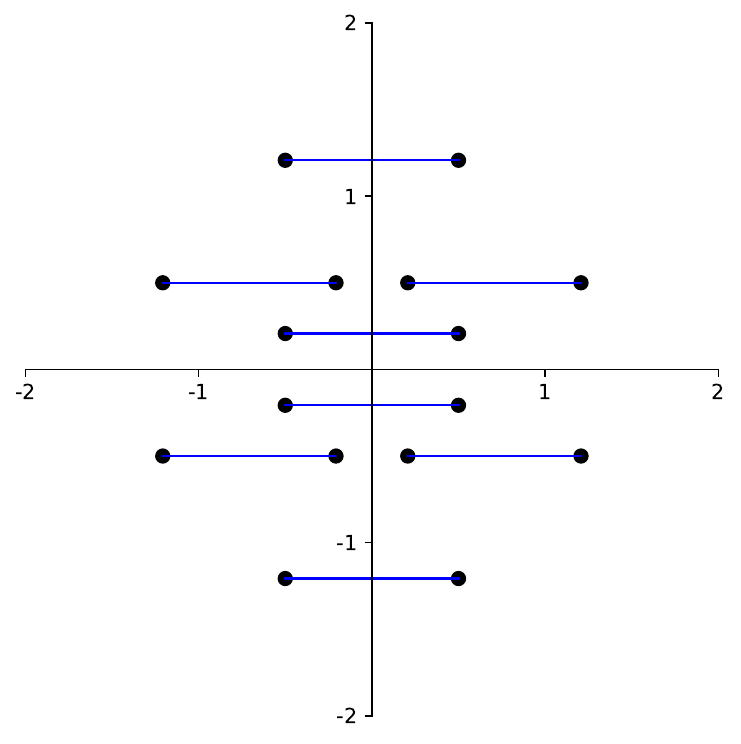} 
\caption{The 4-interval lattice.}
\label{Fig1}
\end{figure}
Following the same steps as in the previous section, and using the fact that the $4-$interval lattice shown in Fig. \ref{Fig1} is integral produces the $4$-interval formula.
\begin{proposition*}
    With $a_n=\left(1+\frac{1}{\sqrt{2}}\right)^n+\left(1-\frac{1}{\sqrt{2}}\right)^n,$ 
    \begin{align*}
    \sum_{k=0}^{n} (-1)^{k+1} \binom{8n+4}{8k+4} a_{4k+2}B_{8n-8k} = (2n+1) (-1)^{n+1}a_{4n+2}.
\end{align*}
\end{proposition*}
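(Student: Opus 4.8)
The plan is to follow the $2$-interval proof almost verbatim, the single new feature being that the coefficient $a_{4k+2}$ is a sum of \emph{two} pure powers rather than one. First I would write
\[
a_{4k+2} = z_1^{\,8k+4} + z_2^{\,8k+4}, \qquad z_1 = \sqrt{1+\tfrac{1}{\sqrt2}}, \quad z_2 = \sqrt{1-\tfrac{1}{\sqrt2}},
\]
since $z_m^{\,8k+4} = (z_m^2)^{4k+2}$ with $z_1^2 = 1+\tfrac1{\sqrt2}$ and $z_2^2 = 1-\tfrac1{\sqrt2}$. Applying Proposition~\ref{Prop_main} with $q=4$, $p=0$ (so that $2qn+p+q=8n+4$ and $2qk+q=8k+4$) once with $z=z_1$ and once with $z=z_2$, and adding, gives with $\omega=e^{i\pi/4}$
\[
\sum_{k=0}^{n}(-1)^{k}\binom{8n+4}{8k+4}a_{4k+2}B_{8n-8k} = \frac{i}{8}\sum_{j=0}^{7}(-1)^{j}\left[B_{8n+4}\!\left(\frac{z_1}{\omega^{j+\frac12}}\right) + B_{8n+4}\!\left(\frac{z_2}{\omega^{j+\frac12}}\right)\right],
\]
the overall sign then matching the $(-1)^{k+1}$ of the statement.

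Next I would recognise the $16$ arguments as an integral lattice. The decisive elementary facts are $z_1=\sqrt2\cos\frac{\pi}{8}$ and $z_2=\sqrt2\sin\frac{\pi}{8}$, so that $z_m/\omega^{j+1/2}=z_m e^{-i(2j+1)\pi/8}$ samples the two circles of radii $z_1,z_2$ at the eight odd multiples of $\frac{\pi}{8}$. From $z_1\sin\frac{\pi}{8}=z_2\cos\frac{\pi}{8}=\frac12$ and $z_1\cos\frac{\pi}{8}-z_2\sin\frac{\pi}{8}=\sqrt2\cos\frac{\pi}{4}=1$ I would read off the eight pairs of arguments sharing an imaginary part and horizontally separated by the integer distance $1$; a short verification shows that within each pair the two points receive opposite signs $(-1)^j$, which is exactly the configuration needed for the shift property \eqref{shift_Bernoulli} to act.

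Applying \eqref{shift_Bernoulli} to each pair collapses the sum into eight monomials $\pm(8n+4)\,w^{8n+3}$, the base points $w$ lying on the two circles at the angles $\tfrac{3\pi}{8},\tfrac{5\pi}{8},\tfrac{7\pi}{8}$ and their conjugates. Pairing complex-conjugate base points turns these into imaginary parts, and with the elementary reduction $\sin\!\big(kn\pi+\tfrac{3k\pi}{8}\big)=(-1)^{kn}\sin\tfrac{3k\pi}{8}$ the whole quantity becomes
\[
\frac{8n+4}{4}(-1)^{n}\left[z_1^{\,8n+3}\Big(\cos\tfrac{\pi}{8}+\sin\tfrac{\pi}{8}\Big) + z_2^{\,8n+3}\Big(\cos\tfrac{\pi}{8}-\sin\tfrac{\pi}{8}\Big)\right].
\]
The closing step uses $\cos\frac{\pi}{8}+\sin\frac{\pi}{8}=\sqrt2\cos\frac{\pi}{8}=z_1$ and $\cos\frac{\pi}{8}-\sin\frac{\pi}{8}=\sqrt2\sin\frac{\pi}{8}=z_2$, which lift each exponent from $8n+3$ back to $8n+4$ and rebuild $z_1^{\,8n+4}+z_2^{\,8n+4}=a_{4n+2}$; combined with $8n+4=4(2n+1)$ and the sign flip this produces $(2n+1)(-1)^{n+1}a_{4n+2}$.

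The anticipated difficulty is bookkeeping rather than conceptual: correctly sorting the sixteen arguments into the eight sign-alternating pairs at distance one and carrying the signs through the shift, so that after conjugate grouping the trigonometric phases align. The feature that makes the identity close is that $\cos\frac{\pi}{8}\pm\sin\frac{\pi}{8}$ equal precisely $z_1$ and $z_2$: this is what restores the missing factor of $z_m$ and turns the degree-$(8n+3)$ monomials back into the degree-$(8n+4)$ terms constituting $a_{4n+2}$.
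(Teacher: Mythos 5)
Your proposal is correct and is exactly the argument the paper intends: the paper omits this proof, stating only that it "follows the same steps as in the case of the 2-interval formula," and you have filled in that template faithfully — applying Proposition \ref{Prop_main} with $q=4$, $p=0$ at the two radii $z_1=\sqrt{2}\cos\frac{\pi}{8}$, $z_2=\sqrt{2}\sin\frac{\pi}{8}$, identifying the eight unit-distance pairs of the $16$-point integral lattice, and collapsing via the shift property \eqref{shift_Bernoulli}. All your key numerical facts check out (in particular $\tan\frac{\pi}{8}=\sqrt{2}-1$, which gives $\cos\frac{\pi}{8}\pm\sin\frac{\pi}{8}=z_1,z_2$ and closes the identity), and your intermediate expression $\frac{8n+4}{4}(-1)^n\bigl[z_1^{8n+3}(\cos\frac{\pi}{8}+\sin\frac{\pi}{8})+z_2^{8n+3}(\cos\frac{\pi}{8}-\sin\frac{\pi}{8})\bigr]$ agrees with a direct computation of the signed lattice sum.
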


The proof is omitted as  it follows  the same steps as  in the case of the 2-interval formula.

\subsubsection{the 6-interval formula}
\label{233}
Our goal is to recover Ramanujan's identity \eqref{finite} using the geometric approach. Proposition \ref{Prop_main} in the case  $p=0,q=6$ produces, with $\omega = e^{\imath \frac{\pi}{6}},$
\[
\sum_{k=0}^n (-1)^k \binom{12n+6}{12k+6} z^{12k+6} B_{12n-12k}
= \frac{\imath}{12}\sum_{j=0}^{11}(-1)^j
B_{12n+6}\left(\frac{z}{\omega^{j+\frac{1}{2}}}\right).
\]
Let us  substitute $z$ successively with $2+\sqrt{3},$ $2-\sqrt{3}$
and $2$ in the left-hand side and add the three contributions. Calling the result $S_n,$ we deduce
\[
S_{n}=\frac{\imath}{12}\sum_{j=0}^{11}\left(-1\right)^{j}\left[B_{12n+6}\left(\frac{\sqrt{2+\sqrt{3}}}{\omega^{j+\frac{1}{2}}}\right)+B_{12n+6}\left(\frac{\sqrt{2-\sqrt{3}}}{\omega^{j+\frac{1}{2}}}\right)+B_{12n+6}\left(\frac{\sqrt{2}}{\omega^{j+\frac{1}{2}}}\right)\right].
\]
Noticing that
\[
\frac{\sqrt{2+\sqrt{3}}}{\omega^{\frac{1}{2}}}=\omega^{0}+\omega^{11},\frac{\sqrt{2-\sqrt{3}}}{\omega^{\frac{1}{2}}}=\omega^{2}+\omega^{9},\frac{\sqrt{2}}{\omega^{\frac{1}{2}}}=\omega^{1}+\omega^{10}
\]
and that 
\[
\frac{1}{\omega^{j}}=\omega^{11j},
\]
we deduce the expression
\[
S_{n}=\frac{\imath}{12}\sum_{j=0}^{11}\left(-1\right)^{j}\left[B_{12n+6}\left(\omega^{11j}+\omega^{11j+11}\right)+B_{12n+6}\left(\omega^{11j+2}+\omega^{11j+9}\right)+B_{12n+6}\left(\omega^{11j+1}+\omega^{11j+10}\right)\right].
\]

Figure \ref{Fig3} shows the set of the  36 points $
\{
\omega^{11j}+\omega^{11j+11}$,
$\omega^{11j+1}+\omega^{11j+10}, \omega^{11j+2}+\omega^{11j+9}\}$ with $\omega = e^{\imath \frac{\pi}{6}}$ and $0\le j\le 11.$ For convenience, these points are
now labeled $A_{0},\dots,A_{11,}$ $B_{0},\dots,B_{11}$
and $C_{0},\dots,C_{11}$, the points $A_j$ being located on the outer circle
and $C_j$ on the inner circle. A point with index $j$ has argument
$\frac{-\pi}{12}\left(j+1\right).$

\begin{figure}[h]
\centering
\includegraphics[scale=0.5]{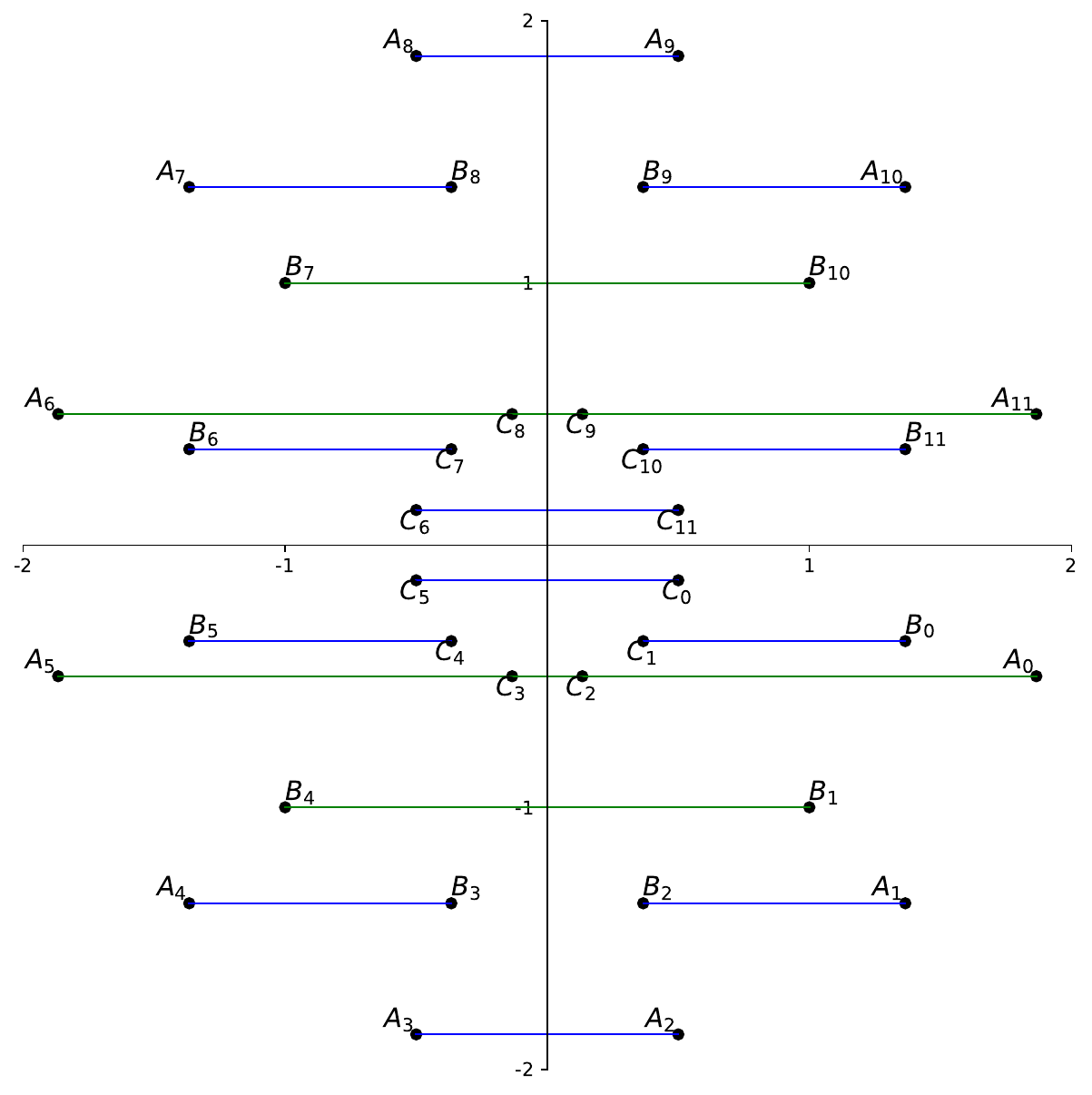}
\caption{In the 6-interval lattice, there are 18 pairs of horizontally aligned point pairs: 1 unit apart are colored blue, and pairs 2 units apart are colored green.}
\label{Fig3}
\end{figure}

The summation lattice is fine: the set of 36 points $\{A_{j},B_{j},C_{j}\}_{0\le j\le11}$ is made of 12 pairs of points $(X,Y)$ horizontally distant by $1:X=Y+1,$  namely
\[
(A_{1},B_{2}),(A_{2},A_{3}),(B_{3},A_{4}),(C_{4},B_{5}),(C_{7},B_{6}),(B_{8},A_{7}),(A_{9},A_{8}),
(A_{10},B_{9}),(B_{11},C_{10}),(B_{0},C_{1}),(C_{11},C_{6}),(C_{0},C_{5}),
\]
and $6$ pairs of points $(X,Y)$ horizontally distant by $2:X=Y+2,$  namely 
\[
(A_{0},C_{3}),(C_{2},A_{5}),(C_{9},A_{6}),(A_{11},C_{8}),(B_{10},B_{7}),(B_{1},B_{4}).
\]

Using the property \eqref{shift_Bernoulli} of Bernoulli polynomials, we deduce, for a couple of points $Y=X+1$ distant by $1$ such that $Y$ has
an even index and $X$ an odd index, 
\[
B_{12n+6}\left(X+1\right)-B_{12n+6}\left(X\right)=\left(12n+6\right)X^{12n+5}
\]
while, for a couple of points $Z=X+2$ distant by $2$ such that $Z$ has
an even index and $X$ an odd index, 
\[
B_{12n+6}\left(X+2\right)-B_{12n+6}\left(X\right)
=\left(12n+6\right)\left(X+1\right)^{12n+5}+\left(12n+6\right)X^{12n+5}.
\]
Overall, with $f\left(Z\right)=\left(12n+6\right)Z^{12n+5}$ and  $\mathcal{P}_{1}=\left\{ A_{3},A_{4},A_{7},A_{8},B_{2},B_{5},B_{6},B_{9},C_{1},C_{5},C_{6},C_{10}\right\},$ the
contribution  from the pairs of points distant by $1$ to the series $S_n$ is 
\[
\sum_{Z_{i}\in\mathcal{P}_{1}}(-1)^{i-1}f(Z_{i})
\]
while, with $g\left(Z\right)=\left(12n+6\right)\left(\left(Z+1\right)^{12n+5}+Z^{12n+5}\right)$ and $\mathcal{P}_{2}=\left\{ A_{5},A_{6},B_{4},B_{7},C_{3},C_{8}\right\},$
the contribution from the pairs of points distant by $2$  to the sum $S_n$ is 
\[
\sum_{Z_{i}\in\mathcal{P}_{2}}(-1)^{i-1}g(Z_{i}).
\]
We deduce the value  of the sum
\[
S_{n}=\frac{\imath}{12}\left(\sum_{Z_{i}\in\mathcal{P}_{1}}(-1)^{i-1}f(Z_{i})+\sum_{Z_{i}\in\mathcal{P}_{2}}(-1)^{i-1}g(Z_{i})\right),
\]
that
simplifies to 
\[
S_{n}=\frac{\imath}{12}\sum_{Z_{i}\in\mathcal{P}_{1}\cup\mathcal{P}_{2}}(-1)^{i-1}f(Z_{i}),
\]
as a consequence of the easily checked fact 
\[
\sum_{Z_{i}\in\mathcal{P}_{2}} (-1)^{i-1}g\left(Z_{i}\right)=
\sum_{Z_{i}\in\mathcal{P}_{2}}(-1)^{i-1}f\left(Z_{i}\right).
\] 
Noticing that 
\[
\mathcal{P}_{1}\cup\mathcal{P}_{2}=\left\{ A_{3},A_{4},A_{5},A_{6},A_{7},A_{8},B_{2},B_{4},B_{5},B_{6},B_{7},B_{9},C_{1},C_{3},C_{5},C_{6},C_{8},C_{10}\right\} 
\]
and evaluating the function $f$ at each point of this set produces, after simplification, the value of $S_n$ that appears on the left-hand side of \eqref{finite}.

\section{The  generating function method}
The use of the geometric method requires, except in the simple 2-and 4-interval case, the a priori knowledge of the sequence of coefficients $\{a_k\}$.
This knowledge requires, in turn, the design of a lattice that is both integral and invariant by rotation, a difficult problem.
Thankfully, the generating function method will allow us to explicitly 
produce N-interval identities
without having to compute the integral lattice. The drawback is that it will produce a sequence that is not optimal, as is explained in the next subsections. However, it  will also generate new extensions of Ramanujan's identity. Let us start with an example.

\subsection{the 6-interval case}
We now approach Ramanujan's identity \eqref{convolution} under the angle of generating functions. 
\begin{proposition*}
Ramanujan's identity \eqref{1} is of the form
\[
    6\psi(z)f(z)=zf'(z)
\]
with the generating functions
\[
\psi(z)=\sum_{n\ge 0} \frac{B_{12n}}{(12n)!}z^n
=\frac{1}{12}\sum_{j=0}^{11} 
\frac{z\omega^j}{e^{z\omega^j}-1}
\] 
and
\begin{equation}
\label{f(z)}
f(z) =
\sum_{k\ge 0}(-1)^ka_{6k+3} \frac{z^{12k+6}}{(12k+6)!}
=-\frac{16}{3}\imath 
\prod_{j=0}^{5}\sinh\left(\frac{z \omega^j }{2}\right),
\end{equation}
and with $\omega = e^{\imath \frac{\pi}{6}}$ and the sequence $\{a_n\}$  as defined in \eqref{1}.
\end{proposition*}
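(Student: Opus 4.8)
The plan is to read the statement as three separate assertions and to dispatch them in increasing order of difficulty. First, \eqref{convolution} \emph{is} the differential relation $6\psi(z)f(z)=zf'(z)$: reading both sides as power series in a single variable $z$ (playing the role of the Taylor variable in \eqref{convolution}), the left-hand side of \eqref{convolution} is exactly the Cauchy product $6\psi(z)f(z)$, while its right-hand side is $zf'(z)$ because $\tfrac{1}{(12k+5)!}=\tfrac{12k+6}{(12k+6)!}$ and $z\tfrac{d}{dz}z^{12k+6}=(12k+6)z^{12k+6}$, so that termwise $zf'(z)=\sum_k(-1)^k a_{6k+3}\tfrac{z^{12k+6}}{(12k+5)!}$. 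This identification is purely formal. The closed form for $\psi$ is equally immediate: applying the multisection formula \eqref{general multisection} to $\tfrac{z}{e^z-1}=\sum_{n\ge0}B_n\tfrac{z^n}{n!}$ with $q=12$, $p=0$, $\omega=e^{2\pi i/12}=e^{i\pi/6}$ gives $\tfrac1{12}\sum_{j=0}^{11}\tfrac{z\omega^j}{e^{z\omega^j}-1}=\sum_{n\ge0}B_{12n}\tfrac{z^{12n}}{(12n)!}$, which is the asserted formula (the series being understood as lacunary in $z^{12}$).

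The product formula for $f$ is the substantial part. I would first record the arithmetic identities $\sqrt{2+\sqrt3}=\omega^{1/2}+\omega^{-1/2}$, $\sqrt2=\omega^{3/2}+\omega^{-3/2}$, $\sqrt{2-\sqrt3}=\omega^{5/2}+\omega^{-5/2}$ (the same relations already used in Subsection~\ref{233}), so that with $\alpha=2+\sqrt3$, $\beta=2-\sqrt3$ one has $f(z)=G(\sqrt\alpha\,z)+G(\sqrt\beta\,z)+G(\sqrt2\,z)$, where $G(u)=\sum_{k\ge0}(-1)^k\tfrac{u^{12k+6}}{(12k+6)!}$. The alternating sign $(-1)^k$ can be absorbed by the substitution $u\mapsto e^{i\pi/12}u$: since $e^{i\pi(12k+6)/12}=(-1)^k i$, one gets $G(u)=-i\sum_k\tfrac{(e^{i\pi/12}u)^{12k+6}}{(12k+6)!}$, and the remaining series is the plain $12$-section of the exponential at $p=6$. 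Applying \eqref{general multisection} (using $\omega^{-6r}=(-1)^r$) and pairing the index $r$ with $r+6$ via $\omega^6=-1$ collapses this to $G(u)=-\tfrac{i}{6}\sum_{r=0}^{5}(-1)^r\cosh\!\big(\omega^{r+1/2}u\big)$.

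It then remains to show that $\sum_{c\in\{\alpha,\beta,2\}}G(\sqrt c\,z)$ equals $-\tfrac{16}{3}i\,\phi(z)$ with $\phi(z)=\prod_{j=0}^5\sinh(\omega^j z/2)$. Two structural facts make the identification plausible before any coefficient computation. First, the functional equation $\phi(\omega z)=-\phi(z)$, immediate from the oddness of $\sinh$ and $\omega^6=-1$, forces $\phi$ to be supported exactly on the exponents $\equiv 6\pmod{12}$, matching $f$. Second, comparing leading terms fixes the constant: $\phi(z)=\tfrac{i}{64}z^6+\cdots$ against $f(z)=\tfrac1{12}z^6+\cdots$ (using $a_3=60$), and $-\tfrac{16}{3}i\cdot\tfrac{i}{64}=\tfrac1{12}$. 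For the full match I would expand $\prod_{j=0}^{5}2\sinh(\omega^j z/2)=\sum_{\epsilon\in\{\pm1\}^6}\big(\prod_j\epsilon_j\big)\exp\!\big(\tfrac z2\sum_j\epsilon_j\omega^j\big)$ and reduce the exponents using the cyclotomic relation $\omega^4=\omega^2-1$; the terms then reorganize, after bookkeeping of the signs $\prod_j\epsilon_j$, into $\cosh$ at the arguments $\omega^{r+1/2}\sqrt c\,z$ — precisely because $\omega^{r+1/2}\sqrt\alpha=\omega^r+\omega^{r+1}$, $\omega^{r+1/2}\sqrt2=\omega^{r-1}+\omega^{r+2}$ and $\omega^{r+1/2}\sqrt\beta=\omega^{r-2}+\omega^{r+3}$ exhibit the three families as pairs of twelfth roots at angular distances $\pi/6,\pi/2,5\pi/6$.

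The main obstacle is exactly this last reorganization: proving that the $2^6$ signed exponential contributions of the expanded product collapse, with the correct multiplicities and signs, onto the $\cosh$-combination defining $\sum_c G(\sqrt c\,z)$, i.e.\ that the product reproduces the sequence $a_{6k+3}=(2+\sqrt3)^{6k+3}+(2-\sqrt3)^{6k+3}+2^{6k+3}$ with alternating signs. This is where the special arithmetic of $\{a_n\}$ genuinely enters, through the three evaluations $\sqrt\alpha=\omega^{1/2}+\omega^{-1/2}$, $\sqrt2=\omega^{3/2}+\omega^{-3/2}$, $\sqrt\beta=\omega^{5/2}+\omega^{-5/2}$; the sign bookkeeping against the alternating $(-1)^k$ is the delicate point, whereas the functional equation, the leading-coefficient check, and the multisections producing $\psi$ and $G$ are all routine.
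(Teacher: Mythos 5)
Your first two assertions are handled exactly as in the paper: reading \eqref{convolution} as $6\psi(z)f(z)=zf'(z)$ is a purely formal identification, and the closed form of $\psi$ is the $12$-section \eqref{general multisection} of \eqref{Bernoulli} (you are right that the exponent $z^n$ in the statement must be read as $z^{12n}$). The genuine gap is in the third assertion, the product formula \eqref{f(z)}, which is the only one carrying content. You reduce it to showing that the $2^6$ signed exponentials in $\prod_{j=0}^{5}2\sinh(\omega^jz/2)=\sum_{\epsilon\in\{\pm1\}^6}\bigl(\prod_{j}\epsilon_j\bigr)\exp\bigl(\tfrac{z}{2}\sum_j\epsilon_j\omega^j\bigr)$ collapse onto the $18$ hyperbolic cosines coming from your multisection of $G$, and you then stop, declaring the sign bookkeeping ``the main obstacle''. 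That collapse is precisely where the arithmetic of $a_n=(2+\sqrt3)^n+(2-\sqrt3)^n+2^n$ enters, so the substantial claim is left as a correctly posed but unexecuted computation; and it is not a formality, since $28$ of the $64$ terms must cancel in signed pairs (the paper's section on the general lattice notes that deciding which terms cancel is difficult for general $N$). Note also that your key relations $\omega^{r+1/2}\sqrt{2+\sqrt3}=\omega^{r}+\omega^{r+1}$, $\omega^{r+1/2}\sqrt2=\omega^{r-1}+\omega^{r+2}$, $\omega^{r+1/2}\sqrt{2-\sqrt3}=\omega^{r-2}+\omega^{r+3}$ are exactly the lattice identifications of Subsection \ref{233}: completing your plan would amount to re-running the geometric proof, which is the very work the generating-function method is designed to bypass.

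What your proposal misses is the paper's actual mechanism: treat $6\psi f=zf'$ as a separable first-order ODE and integrate it in closed form. Since $\frac{f'(z)}{f(z)}=\frac12\sum_{j=0}^{11}\frac{\omega^j}{e^{z\omega^j}-1}$ and $\frac{d}{dz}\log\bigl(e^{-\omega^jz}-1\bigr)=\frac{\omega^j}{e^{z\omega^j}-1}$, pairing the factor $j$ with the factor $j+6$ through $(e^{z\omega^j}-1)(e^{-z\omega^j}-1)=-4\sinh^2(z\omega^j/2)$ yields $Kf(z)=\prod_{j=0}^{5}2\imath\sinh(z\omega^j/2)$ in one stroke, and the constant is fixed by $f(z)=\frac{z^6}{12}+o(z^6)$ (your computation $a_3=60$). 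No expansion into $2^6$ terms occurs anywhere. The one statement left over, namely that the $a_n$-series equals this product, which is exactly the identity you are stuck on, is what the paper calls ``easily checked''; inside the paper it is backed either by the geometric proof of \eqref{finite} in Section 2 (which shows the series solves the ODE, hence equals the normalized product by uniqueness of the solution), or by the Weierstrass-product route $\prod_{j=0}^{5}\sinh(\omega^ju)=\imath u^6\prod_{n\ge1}\bigl(1-\tfrac{u^{12}}{n^{12}\pi^{12}}\bigr)$ combined with the Borwein--Bradley--Broadhurst evaluation of $\zeta(\{12\}_k)$, where the product over the six rotations collapses cyclotomically via $\prod_{j=0}^{5}(1+\omega^{2j}t)=1-t^6$ rather than by term-by-term bookkeeping. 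Either of these would close your gap far more economically than the $64$-term reorganization you propose.
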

\begin{proof}
Using the multisection identity \eqref{general multisection} and the Bernoulli generating function \eqref{Bernoulli}, the series
\[
\psi(z)=\sum_{n\ge 0} \frac{B_{12n}}{(12n)!}z^n
\]
is first expressed as
\[
\psi(z)=\frac{1}{12}\sum_{j=0}^{11} 
\frac{z\omega^j}{e^{z\omega^j}-1}
\]
with $\omega = e^{\imath \frac{\pi}{6}}$ the twelfth-root of unity.
Next we notice that, with $f(z)$ defined as in \eqref{f(z)}, equation \eqref{convolution} assumes the form $6\psi(z)f(z)=zf'(z)$ and thus can be rewritten as
\begin{align*}
    \frac{f'(z)}{f(z)} = \frac{1}{2}\sum_{j=0}^{11} 
\frac{\omega^j}{e^{z\omega^j}-1}.
\end{align*}
Upon integrating, we obtain
\begin{align*}
    \log f(z) +\log K = \frac{1}{2} \sum_{j=0}^{11} \log(e^{-\omega^jz}-1) =  \log\left(\prod_{j=0}^{5} 2\imath \sinh\left(\frac{ \omega^j z}{2}\right) \right),
\end{align*} 
where we have used the fact that
\begin{equation}\label{sinh}
    \prod_{j=0}^{11} (e^{z\omega^j}-1) = \prod_{j=0}^{5} (e^{z\omega^j}-1) (e^{-z\omega^j}-1)  =
      \prod_{j=0}^{5} (-4)\sinh^2\left(\frac{z \omega^j }{2}\right).
\end{equation}
Computing the integration constant $K$ so that $f(z)=\frac{z^6}{12}+o(z^6),$ we deduce the solution 
\[
f(z) = -\frac{16}{3}\imath 
\prod_{j=0}^{5}\sinh\left(\frac{z \omega^j }{2}\right).
\]
It is easily checked that the sequence $\{a_n\}$  in \eqref{convolution} satisfies 
\[
\sum_{k\ge 0}(-1)^ka_{6k+3} \frac{z^{12k+6}}{(12k+6)!} = -\frac{16}{3}\imath \prod_{j=0}^{5}  \sinh\left(\frac{z \omega^j }{2}\right). 
\]
We conclude this proof by noticing that the choice of the integration constant $K$ is arbitrary since the sequence $\{a_n\}$ in \eqref{convolution} is defined up to an arbitrary non-zero multiplicative constant.
\end{proof}

\subsection{Generating  function: the general case}

\begin{proposition}\label{dim1}
With $N$ an even integer and $\omega=e^{\imath\frac{\pi}{N}}$ the $2N-$th root of unity, the $N-$ interval version of  Ramanujan's identity reads
\[
N\sum_{j\ge 0}B_{2Nj}\frac{z^{2Nj}}{(2Nj)!}
\sum_{n\ge 0}(-1)^N a_{nN+N/2}\frac{x^{2Nn+N}}{(2Nn+N)!} =
\sum_{n\ge 0}(-1)^N a_{nN+N/2}\frac{x^{2Nn+N}}{(2Nn+N-1)!} 
\]
where the generating function of the $\{a_n\}$ coefficients is
    \begin{equation}
    \label{anN}
    \sum_{n\ge 0}(-1)^n a_{nN+N/2}\frac{x^{2Nn+N}}{(2Nn+N)!} 
    = \imath \frac{2^N}{2N} 
    \prod_{j=0}^{N-1} \sinh\left(\frac{z\omega^j}{2}\right)
    \end{equation}
\end{proposition}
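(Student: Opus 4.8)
The plan is to follow the scheme of the $6$-interval case verbatim, treating the claimed identity as a first-order linear ODE for the generating function of the $\{a_n\}$. First I would set $f(z)=\sum_{n\ge 0}(-1)^n a_{nN+N/2}\frac{z^{2Nn+N}}{(2Nn+N)!}$, so that term-by-term differentiation gives $zf'(z)=\sum_{n\ge 0}(-1)^n a_{nN+N/2}\frac{z^{2Nn+N}}{(2Nn+N-1)!}$, i.e. exactly the right-hand side of the displayed identity. Extracting the coefficient of $z^{2Nn+N}$ via the Cauchy product then shows that the whole identity is equivalent to the single functional relation $N\Psi(z)f(z)=zf'(z)$, where $\Psi(z)=\sum_{j\ge 0}B_{2Nj}\frac{z^{2Nj}}{(2Nj)!}$. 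Applying the multisection formula \eqref{general multisection} to the Bernoulli generating function \eqref{Bernoulli} with $q=2N$, $p=0$ and the primitive $2N$-th root of unity $\omega=e^{\imath\pi/N}$ gives $\Psi(z)=\frac{1}{2N}\sum_{j=0}^{2N-1}\frac{z\omega^j}{e^{z\omega^j}-1}$, exactly as in the $6$-interval computation.

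With this in hand, the functional relation becomes the separable equation $\frac{f'(z)}{f(z)}=\frac{N\Psi(z)}{z}=\frac12\sum_{j=0}^{2N-1}\frac{\omega^j}{e^{z\omega^j}-1}$. Integrating term by term, using $\int\frac{\omega^j}{e^{z\omega^j}-1}\,dz=\log(e^{-z\omega^j}-1)$, yields $\log f(z)+\log K=\frac12\sum_{j=0}^{2N-1}\log(e^{-z\omega^j}-1)$ for some constant $K$. The decisive step is the pairing $j\leftrightarrow j+N$: since $\omega^N=e^{\imath\pi}=-1$, the $(j+N)$-th factor is the reflection $e^{z\omega^j}-1$ of the $j$-th, so the $2N$-fold product collapses to $\prod_{j=0}^{N-1}(e^{-z\omega^j}-1)(e^{z\omega^j}-1)=\prod_{j=0}^{N-1}(-4)\sinh^2(z\omega^j/2)$, which is exactly the $N$-fold analogue of \eqref{sinh}. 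Taking the square root (choosing $\sqrt{-4}=2\imath$) then gives $f(z)=\frac{(2\imath)^N}{K}\prod_{j=0}^{N-1}\sinh(z\omega^j/2)$, the claimed closed form up to the overall constant.

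It remains to pin down the constant, i.e. to fix the normalization of the $\{a_n\}$, by matching lowest-order terms. The series $\prod_{j=0}^{N-1}\sinh(z\omega^j/2)$ has leading term $\big(\prod_{j=0}^{N-1}\omega^j\big)\frac{z^N}{2^N}=\omega^{N(N-1)/2}\frac{z^N}{2^N}$, and comparing with the $z^N$-coefficient $a_{N/2}/N!$ of $f$ determines $K$, producing the factor $\imath\,2^N/(2N)$ asserted in \eqref{anN}. I expect the main obstacle to be precisely this phase bookkeeping: evaluating the Gauss-type product $\omega^{N(N-1)/2}$ for even $N$ (where it equals $-\imath(-1)^{N/2}$), choosing a branch of the square root that is consistent across all $N$ factors, and tracking the resulting powers of $\imath$ so that the constant comes out as asserted, with a single overall factor of $\imath$. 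A secondary point worth checking is that the even-versus-odd index structure behind the sign pattern $(-1)^n$ persists for every even $N$, so that the $\sinh$-product genuinely generates a sequence $\{a_n\}$ of the expected type; but this is a matter of expanding the product rather than a real difficulty.
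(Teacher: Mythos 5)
Your proposal is correct and follows essentially the same route as the paper: the paper's proof of Proposition \ref{dim1} consists precisely of the reduction to the ODE $N\psi(z)f(z)=zf'(z)$, the $2N$-fold multisection of the Bernoulli generating function, the pairing $j\leftrightarrow j+N$ that collapses the product to $\prod_{j=0}^{N-1}(-4)\sinh^{2}\left(\frac{z\omega^{j}}{2}\right)$, and the normalization $f(z)=\frac{z^{N}}{2N}+o(z^{N})$ --- all of which you spell out in more detail than the paper does. The phase issue you flag is real but lies in the paper's statement rather than in your argument: matching that normalization actually yields the constant $\imath(-1)^{N/2}\frac{2^{N}}{2N}$ (which for $N=6$ correctly reproduces the factor $-\frac{16}{3}\imath$ of the $6$-interval case), and the discrepancy with \eqref{anN} is immaterial since, as the paper itself remarks, the sequence $\{a_{n}\}$ is only defined up to a nonzero multiplicative constant.
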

\begin{proof}
Proceeding as in the previous section, with the generating function 
\[\psi(z) = \frac{1}{2N} \sum_{k=0}^{2N-1} \frac{\omega^k z}{e^{\omega^k z}-1},\]
the equation 
\begin{equation}\label{eqn}
    N f(z) \psi(z) = zf'(z)
\end{equation}
admits the solution 
\[Kf(z) = \prod_{j=0}^{N-1}2\imath\sinh\left(\frac{\omega^{j}z}{2}\right).\]
Choosing the constant of integration $K$ such that $f(z) = \frac{z^N}{2N}+o(z^N)$ produces the desired result.
\end{proof}

\subsection{the coefficients  $\{a_n\}$}
In order to simplify notations, we consider  the sequence $\alpha_{k}^{\left(N\right)}$ defined by the generating function
\begin{equation}
\label{gf}
\sum_{k\ge0}\alpha_{k}^{\left(N\right)}\frac{z^{2kN+N}}{\left(2kN+N\right)!}=\prod_{j=0}^{N-1}\sinh\left(\omega^{j}z\right), 
\end{equation}
so that the sequences  $\{a_k\}$ and $\{\alpha_{k}^{\left(N\right)}\}$ are related as
\begin{equation}
\label{alpha_a}
(-1)^k a_{Nk+N/2} = \alpha_{k}^{\left(N\right)}. 
\end{equation}

We propose next several methods to evaluate the coefficients $\alpha_{k}^{\left(N\right)}$, and we relate them to classic special functions.

\subsubsection{link with  Bernoulli-Barnes polynomials}
The  expression \eqref{f(z)} for $f(z)$ suggests a link with Bernoulli-Barnes polynomials with negative order, defined by their generating function \cite[(39) p.40]{Bateman} (see also N\"{o}rlund p.142)
\begin{equation}
\label{BBgf}
\sum_{n \ge 0}B_n^{(-m)}(x; \alpha_1,\dots,\alpha_m)\frac{z^n}{n!}=e^{zx}\prod_{k=1}^m \frac{e^{\alpha_k z} -1}{\alpha_k z}.
\end{equation}
\begin{proposition*}
The coefficients $\alpha_k^{(N)}$ are related to the Bernoulli Barnes polynomials with negative order $-N$ as
\[
\alpha_n^{(N)}
=
\frac{\imath^{N}}{2N}
\frac{(2nN+N)!}{(2nN)!} 
B_{2nN}^{(-N)}
\left(\frac{1}{\omega -1};
1,\omega,\dots,\omega^{N-1}\right).
\]
\end{proposition*}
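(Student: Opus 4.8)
The plan is to reduce the whole statement to a single coefficient extraction from the Bernoulli--Barnes generating function \eqref{BBgf}. I would start from the product representation of the generating function of the $\alpha_k^{(N)}$ (equivalently, from \eqref{anN} via \eqref{alpha_a}) and first rewrite each hyperbolic sine in exponential form, $\sinh(\omega^j z/2)=\tfrac12 e^{-\omega^j z/2}(e^{\omega^j z}-1)$, and pull the exponential prefactors out of the product. This produces a factor $e^{-\frac{z}{2}\sum_{j=0}^{N-1}\omega^j}$ multiplying $\prod_{j=0}^{N-1}(e^{\omega^j z}-1)$, which is exactly the shape of the right-hand side of \eqref{BBgf} with $m=N$ and nodes $\alpha_k=\omega^{k-1}$.

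The pivotal computation is the geometric sum $\sum_{j=0}^{N-1}\omega^j=\frac{\omega^N-1}{\omega-1}=\frac{-2}{\omega-1}$, using $\omega^N=e^{\imath\pi}=-1$. Hence the stray prefactor equals $e^{\frac{z}{\omega-1}}$, and this is precisely the factor $e^{zx}$ in \eqref{BBgf} for the choice $x=\frac{1}{\omega-1}$. This is the conceptual heart of the statement: the Bernoulli--Barnes shift parameter $\frac{1}{\omega-1}$ is forced exactly so as to absorb the asymmetry introduced when one passes from the (even) product of sines to the (non-symmetric) product $\prod_{j}(e^{\omega^j z}-1)$, and its appearance in the final formula is therefore not a coincidence but a necessity.

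With $x$ fixed, I would invoke \eqref{BBgf} to write $\prod_{j=0}^{N-1}\frac{e^{\omega^j z}-1}{\omega^j z}=e^{-z/(\omega-1)}\sum_{n\ge0}B_n^{(-N)}\!\left(\frac{1}{\omega-1};1,\dots,\omega^{N-1}\right)\frac{z^n}{n!}$, clear the denominator $\prod_{j=0}^{N-1}\omega^j z = z^N\,\omega^{N(N-1)/2}=\imath^{N-1}z^N$, and collect the constants. Substituting back into the $\sinh$-product, which carries the prefactor $\imath\,2^N/(2N)$ from \eqref{anN} together with the $2^{-N}$ coming from the sines, the powers of $2$ cancel to leave $\frac{1}{2N}$ and the two factors of $\imath$ combine as $\imath\cdot\imath^{N-1}=\imath^{N}$, giving the global constant $\imath^{N}/(2N)$.

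Finally I would match powers of $z$: the left-hand generating function only sees exponents $2nN+N$, so the Bernoulli--Barnes index must be $n_{\mathrm{BB}}=2nN$, and reading off the coefficient of $z^{2nN+N}$ yields $\frac{\alpha_n^{(N)}}{(2nN+N)!}=\frac{\imath^N}{2N}\frac{B_{2nN}^{(-N)}}{(2nN)!}$, which is the claimed identity. I expect the only genuine obstacle to be the bookkeeping of the multiplicative constants together with the $z$-rescaling between the $\sinh(\omega^j z/2)$ normalization used here and the $\sinh(\omega^j z)$ normalization of \eqref{gf}; the structural identification with \eqref{BBgf} and the forced value $x=\frac{1}{\omega-1}$ are immediate once the geometric sum is evaluated.
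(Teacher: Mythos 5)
Your proposal is correct and is essentially the paper's own proof: the paper likewise rewrites $\prod_j\sinh(\omega^j z/2)$ as an exponential prefactor times $\prod_j(e^{\omega^j z}-1)$, matches this against \eqref{BBgf} with shift $\frac{1}{\omega-1}$ and nodes $1,\omega,\dots,\omega^{N-1}$, collects the constant $\imath^{N-1}(z/2)^N$, and extracts the coefficient of $z^{2nN+N}$ via \eqref{anN} and \eqref{alpha_a}. Your explicit evaluation of the geometric sum is in fact slightly cleaner than the paper's, which misstates the shift as $x=-\sum_j\omega^j=\frac{2}{\omega-1}$ (an inconsequential factor-of-two slip, since its displayed expansion correctly uses $\frac{1}{\omega-1}$).
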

\begin{proof}
With $\omega=e^{\imath \frac{\pi}{N}}$ and  $x=- \sum_{j=0}^{N-1} \omega^j = \frac{2}{\omega-1},$ a short computation using \eqref{BBgf} produces
\[
\prod_{j=0}^{N-1} \sinh \left(\frac{z\omega^j}{2}\right)
= \imath^{N-1}\left(\frac{z}{2}\right)^N
\sum_{n\ge 6}B_{n}^{(-N)}\left(
\frac{1}{\omega-1} ; 1,\omega,\dots,\omega^{N-1}\right)\frac{z^{n}}{n!}
\]
so that, using \eqref{anN},
\[
a_{nN+\frac{N}{2}}
=
\frac{\imath^{N+2n}}{2N}
\frac{(2nN+N)!}{(2nN)!} 
B_{2nN}^{(-N)}
\left(\frac{1}{\omega -1};
1,\omega,\dots,\omega^{N-1}\right).
\]
Using \eqref{alpha_a} produces the desired result.
\end{proof}
\subsubsection{a recurrence}
Computing the coefficients $\alpha_k^{(N)}$ is a difficult task, even for low values of $N.$ The following identity  allows to compute them recursively.
\begin{theorem}
The coefficients $\alpha_n^{(2N)}$ are computed in terms of the coefficients $\alpha_n^{(N)}$ by the recurrence identity
\begin{equation}
\label{recurrence}
\alpha_{n}^{\left(2N\right)}=\imath\sum_{l=0}^{2n}\left(-1\right)^{l}\frac{\alpha_{2n-l}^{\left(N\right)}\alpha_{l}^{\left(N\right)}}{\left(4Nn-2Nl+N\right)!\left(2Nl+N\right)!}.
\end{equation}
\end{theorem}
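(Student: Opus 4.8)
The plan is to prove the recurrence entirely at the level of the generating function \eqref{gf}, by factorizing the defining product for $\alpha^{(2N)}$ into two copies of the product for $\alpha^{(N)}$. Write $\omega = e^{\imath\pi/N}$ and introduce the $4N$-th root of unity $\eta = e^{\imath\pi/(2N)}$, so that $\eta^2 = \omega$. Set
\[
S_N(z) = \prod_{j=0}^{N-1}\sinh(\omega^j z) = \sum_{k\ge 0}\alpha_k^{(N)}\frac{z^{2kN+N}}{(2kN+N)!},
\]
and let $S_{2N}(z) = \prod_{j=0}^{2N-1}\sinh(\eta^j z)$ be the analogous generating function of the $\alpha_k^{(2N)}$. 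The central step is to split the product defining $S_{2N}(z)$ according to the parity of the index $j$: the even indices $j = 2m$ contribute $\sinh(\eta^{2m}z) = \sinh(\omega^m z)$, whose product over $m = 0,\dots,N-1$ is exactly $S_N(z)$, while the odd indices $j = 2m+1$ contribute $\sinh(\omega^m\eta z)$, whose product is $S_N(\eta z)$. This yields the key factorization
\[
S_{2N}(z) = S_N(z)\,S_N(\eta z).
\]

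Next I would record how the dilation by $\eta$ acts on the series. Since $\eta^{2kN+N} = e^{\imath\pi(k+1/2)} = \imath(-1)^k$, one has
\[
S_N(\eta z) = \imath\sum_{k\ge 0}(-1)^k\alpha_k^{(N)}\frac{z^{2kN+N}}{(2kN+N)!}.
\]
Forming the Cauchy product of $S_N(z)$ and $S_N(\eta z)$, a term carrying $z^{2aN+N}$ from the first factor and $z^{2bN+N}$ from the second combines into the power $z^{2N(a+b+1)}$. Extracting the coefficient of $z^{4nN+2N}$ therefore forces $a+b+1 = 2n+1$, i.e. $a+b = 2n$; writing $b = l$ and $a = 2n-l$ with $0\le l\le 2n$, and comparing with the expansion $S_{2N}(z) = \sum_{n} \alpha_n^{(2N)} z^{4nN+2N}/(4nN+2N)!$, yields the recurrence \eqref{recurrence}, the global factor $\imath$ and the signs $(-1)^l$ being inherited from $S_N(\eta z)$.

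The only structurally delicate point is the factorization $S_{2N}(z) = S_N(z)\,S_N(\eta z)$, which rests on the two observations that $\eta^2 = \omega$ and that scaling the argument by $\eta$ sends the odd-indexed factors to a faithful rescaled copy of $S_N$; everything after that is bookkeeping. The step I expect to require the most care is the coefficient extraction itself: tracking the evaluation $\eta^{2kN+N} = \imath(-1)^k$ and carrying the factorials $(4nN+2N)!$, $(4Nn-2Nl+N)!$ and $(2Nl+N)!$ correctly through the Cauchy product is precisely where a misplaced normalization or an off-by-one in the summation range could slip in, so I would verify the normalization against the base case $n=0$ before committing to the final form.
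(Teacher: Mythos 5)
Your proposal is correct and follows essentially the same route as the paper's own proof: the identical parity bisection of $\prod_{j=0}^{2N-1}\sinh(\eta^j z)$ into $S_N(z)\,S_N(\eta z)$ with $\eta=e^{\imath\pi/(2N)}$, the same evaluation $\eta^{2kN+N}=\imath(-1)^k$, and the same Cauchy-product coefficient extraction. Your instinct to double-check the factorial normalization is well placed: a literal comparison of coefficients of $z^{4Nn+2N}$ gives
\[
\alpha_{n}^{(2N)}=\imath\,(4Nn+2N)!\sum_{l=0}^{2n}(-1)^{l}\frac{\alpha_{2n-l}^{(N)}\alpha_{l}^{(N)}}{(4Nn-2Nl+N)!\,(2Nl+N)!},
\]
so the stated identity (as printed in the paper, whose proof has the same final step) is missing the factor $(4Nn+2N)!$.
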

\begin{proof}
The coefficients $\alpha_k^{(N)}$ are defined by the generating function
\[
\varphi_{N}\left(z\right)=\sum_{k\ge0}\alpha_{k}^{\left(N\right)}\frac{z^{2Nk+N}}{\left(2Nk+N\right)!}=\prod_{j=0}^{N-1}\sinh\left(ze^{\imath j\frac{\pi}{N}}\right)
\]
Now consider 
\[
\varphi_{2N}\left(z\right)=\sum_{k\ge0}\alpha_{k}^{\left(2N\right)}\frac{z^{4Nk+2N}}{\left(4Nk+2N\right)!}=\prod_{j=0}^{2N-1}\sinh\left(ze^{\imath j\frac{\pi}{2N}}\right)
\]
and use the bisection technique to obtain
\[
\varphi_{2N}\left(z\right)=\prod_{j=0}^{2N-1}\sinh\left(ze^{\imath j\frac{\pi}{2N}}\right)=\prod_{j=0}^{N-1}\sinh\left(ze^{\imath\left(2j\right)\frac{\pi}{2N}}\right)\prod_{j=0}^{N-1}\sinh\left(ze^{\imath\left(2j+1\right)\frac{\pi}{2N}}\right)
\]
\[
=\prod_{j=0}^{N-1}\sinh\left(ze^{\imath j\frac{\pi}{N}}\right)\prod_{j=0}^{N-1}\sinh\left(ze^{\imath\frac{\pi}{2N}}e^{\imath j\frac{\pi}{N}}\right)
\]
\[
=\varphi_{N}\left(z\right)\varphi_{N}\left(ze^{\imath\frac{\pi}{2N}}\right)
\]
so 
\[
\sum_{k\ge0}\alpha_{k}^{\left(2N\right)}\frac{z^{4Nk+2N}}{\left(4Nk+2N\right)!}=\sum_{k\ge0}\alpha_{k}^{\left(N\right)}\frac{z^{2Nk+N}}{\left(2Nk+N\right)!}\sum_{k\ge0}\alpha_{k}^{\left(N\right)}\frac{\left(ze^{\imath\frac{\pi}{2N}}\right)^{2Nk+N}}{\left(2Nk+N\right)!}
\]
\[
=\imath\sum_{k\ge0}\alpha_{k}^{\left(N\right)}\frac{z^{2Nk+N}}{\left(2Nk+N\right)!}\sum_{k\ge0}\alpha_{k}^{\left(N\right)}\frac{z^{2Nk+N}\left(-1\right)^{k}}{\left(2Nk+N\right)!}.
\]
Expanding
\[
\sum_{k\ge0}\alpha_{k}^{\left(N\right)}\frac{z^{2Nk+N}}{\left(2Nk+N\right)!}\sum_{k\ge0}\alpha_{k}^{\left(N\right)}\frac{z^{2Nk+N}\left(-1\right)^{k}}{\left(2Nk+N\right)!}
=\sum_{k,l}\left(-1\right)^{l}\frac{\alpha_{k}^{\left(N\right)}\alpha_{l}^{\left(N\right)}}{\left(2Nk+N\right)!\left(2Nl+N\right)!}z^{2N\left(k+l\right)+2N}
\]
\[
=\sum_{n\ge0}\sum_{l=0}^{n}\left(-1\right)^{l}\frac{\alpha_{n-l}^{\left(N\right)}\alpha_{l}^{\left(N\right)}}{\left(2Nn-2Nl+N\right)!\left(2Nl+N\right)!}z^{2Nn+2N}
=\sum_{n\ge0}\sum_{l=0}^{2n}\left(-1\right)^{l}\frac{\alpha_{2n-l}^{\left(N\right)}\alpha_{l}^{\left(N\right)}}{\left(4Nn-2Nl+N\right)!\left(2Nl+N\right)!}z^{4Nn+2N}
\]
we deduce
\[
\alpha_{n}^{\left(2N\right)}=\imath\sum_{l=0}^{2n}\left(-1\right)^{l}\frac{\alpha_{2n-l}^{\left(N\right)}\alpha_{l}^{\left(N\right)}}{\left(4Nn-2Nl+N\right)!\left(2Nl+N\right)!}.
\]
\end{proof}

\subsubsection{link with multiple zeta values}

Multiple zeta values  \cite{Zagier} are defined  as the nested zeta series
\[
\zeta(N_1,\dots,N_k) = \sum_{n_1 > \dots > n_k>0}\frac{1}{n_1^{N_1}\dots n_k^{N_k}}, 
\]
with the special case $N_1=\dots=N_k=N$ denoted as
\[
\zeta(\{N\}_{k})=\zeta(\underbrace{N,\ldots,N}_{k}).
\]
In \cite{Chen}, Y.-H. Chen cleverly relates Ramanujan's lacunary identities for Bernoulli numbers to the convolution identity for multiple zeta values due to Bowman and Bradley \cite{Bowman}:
\[
k \zeta\left(\{N\}_k\right) = \sum_{j=1}^k
(-1)^{j+1}\zeta(jN)\zeta\left(\{N\}_{k-j}\right).
\]
For $N$ even, the value $\zeta(jN)$ can be expressed as a Bernoulli number using Euler's well-known identity
\[
\zeta(2k) = (-1)^{k+1}\frac{(2\pi)^{2k}B_{2k}}{2(2k)!}.
\]
It is thus not surprising that the coefficients $\alpha_k^{(N)}$ can be expressed in terms of multiple zeta values, as follows.
\begin{proposition*}
The coefficients $\alpha_k^{(N)}$ are related to multiple zeta values as
\begin{align}
\label{mzv}
    \alpha_{k}^{(N)} = \imath \frac{(-1)^{(N+1)(k+1)}(2kN+N)!}{\pi^{2kN}}\zeta(\{2N\}_{k}).
\end{align}    
\end{proposition*}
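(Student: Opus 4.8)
The plan is to feed the Weierstrass product for the hyperbolic sine into the defining identity \eqref{gf} and then to regroup the resulting doubly-indexed product over the $N$-th roots of unity. Starting from $\sinh w = w\prod_{n\ge1}\bigl(1+\tfrac{w^2}{n^2\pi^2}\bigr)$, I would apply this to each of the $N$ factors $\sinh(\omega^j z)$ in $\prod_{j=0}^{N-1}\sinh(\omega^j z)$ and collect the linear terms. This produces a global prefactor $\prod_{j=0}^{N-1}\omega^j z = \imath^{N-1}z^N$, using $\sum_{j=0}^{N-1}j=\tfrac{N(N-1)}{2}$ and $\omega^{N(N-1)/2}=e^{\imath\pi(N-1)/2}=\imath^{N-1}$.

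The key algebraic simplification is that, for each fixed $n$, the $N$ quadratic factors bundle together neatly. Because $\omega^2=e^{2\imath\pi/N}$ is a primitive $N$-th root of unity, as $j$ ranges over $0,\dots,N-1$ the numbers $\omega^{2j}$ exhaust the $N$-th roots of unity, so with $w=\tfrac{z^2}{n^2\pi^2}$ the elementary identity $\prod_{\zeta^N=1}(1+\zeta w)=1-(-1)^N w^N$ applies. Each $n$-bundle thereby collapses to $1+(-1)^{N+1}\tfrac{z^{2N}}{n^{2N}\pi^{2N}}$, and one obtains
\[
\prod_{j=0}^{N-1}\sinh(\omega^j z)=\imath^{N-1}z^N\prod_{n\ge1}\Bigl(1+(-1)^{N+1}\frac{z^{2N}}{n^{2N}\pi^{2N}}\Bigr).
\]

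Next I would expand this single remaining infinite product as a power series in $z^{2N}$. The coefficient of $t^k$ in $\prod_{n\ge1}(1+t/n^{2N})$ is the $k$-th elementary symmetric function of $\{1/n^{2N}\}_{n\ge1}$, which is exactly the multiple zeta value $\zeta(\{2N\}_k)=\sum_{n_1>\dots>n_k\ge1}(n_1\cdots n_k)^{-2N}$. Taking $t=(-1)^{N+1}z^{2N}/\pi^{2N}$ gives
\[
\prod_{j=0}^{N-1}\sinh(\omega^j z)=\imath^{N-1}\sum_{k\ge0}(-1)^{(N+1)k}\,\zeta(\{2N\}_k)\,\frac{z^{2Nk+N}}{\pi^{2Nk}}.
\]
Matching the coefficient of $z^{2Nk+N}$ with the definition \eqref{gf} and multiplying through by $(2Nk+N)!$ then yields $\alpha_k^{(N)}=\imath^{N-1}(-1)^{(N+1)k}\tfrac{(2Nk+N)!}{\pi^{2Nk}}\zeta(\{2N\}_k)$.

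I expect the only delicate point to be the final consolidation of this prefactor into the compact form claimed in \eqref{mzv}: one must check that, for even $N$, the phase $\imath^{N-1}(-1)^{(N+1)k}$ equals $\imath(-1)^{(N+1)(k+1)}$. Writing $\imath^{N-1}=\imath(-1)^{N/2}$ and $(-1)^{(N+1)(k+1)}=-(-1)^{(N+1)k}$ for even $N$ reduces the matter to tracking $(-1)^{N/2}$, so careful sign bookkeeping here is the main obstacle; the analytic ingredients (the product for $\sinh$, the roots-of-unity collapse, and the identification of the elementary symmetric functions with $\zeta(\{2N\}_k)$) are all routine.
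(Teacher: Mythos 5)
Your derivation takes exactly the same route as the paper's own proof: the Weierstrass product for each factor $\sinh(\omega^j z)$, the collapse of $\prod_{j=0}^{N-1}\bigl(1+\tfrac{z^2\omega^{2j}}{n^2\pi^2}\bigr)$ over the $N$-th roots of unity via $\prod_{\zeta^N=1}(1+\zeta w)=1-(-1)^N w^N$, and the identification of the coefficients of $\prod_{n\ge1}(1+t\,n^{-2N})$ with the elementary symmetric functions of $\{n^{-2N}\}$, i.e.\ with $\zeta(\{2N\}_k)$. All of these steps are correct, and they yield
\[
\alpha_k^{(N)}=\imath^{N-1}(-1)^{(N+1)k}\,\frac{(2kN+N)!}{\pi^{2kN}}\,\zeta(\{2N\}_k).
\]

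The ``delicate point'' you defer at the end, however, is not a bookkeeping matter that careful work will dissolve: your prefactor $\imath^{N-1}(-1)^{(N+1)k}$ and the printed prefactor $\imath(-1)^{(N+1)(k+1)}$ differ by exactly $\imath^{N}$, so for even $N$ they agree only when $4\mid N$ and differ by a sign when $N\equiv 2\pmod 4$ (your auxiliary identity $\imath^{N-1}=\imath(-1)^{N/2}$ is also off by a sign; for even $N$ one has $\imath^{N-1}=-\imath(-1)^{N/2}$, but even after correcting this the reconciliation still fails unless $4\mid N$). The discrepancy is not on your side: the paper's proof starts from the product $\sinh(x)=\imath x\prod_{n\ge1}\bigl(1+\tfrac{x^2}{n^2\pi^2}\bigr)$, in which the factor $\imath$ is spurious, and this injects precisely the extra $\imath^N$ into \eqref{mzv}. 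The paper's own examples confirm your version rather than the printed one: for $N=2$ the paper computes $\alpha_k^{(2)}=\imath(-1)^k 2^{2k+1}$, which is what your formula gives, whereas \eqref{mzv} together with Borwein's evaluation $\zeta(\{4\}_k)=\tfrac{4(2\pi)^{4k}}{(4k+2)!}2^{-(2k+1)}$ would give $\imath(-1)^{k+1}2^{2k+1}$; the same sign flip occurs for $N=6$, and for $N=1$ formula \eqref{mzv} returns $\imath$ instead of the true value $\alpha_k^{(1)}=1$. So your argument, stopped at the display above, is a correct proof of the corrected statement; rather than forcing your prefactor into the printed form, you should flag \eqref{mzv} as misstated by the factor $\imath^N$.
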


\begin{proof}
    
Since the hyperbolic  function admits the Weirstrass infinite product representation
\begin{align*}
    \sinh(x) = \imath x\prod_{n=1}^{\infty}\left(1+ \frac{x^2}{n^2\pi^2}\right),
\end{align*}
we have 
\begin{align*}
    \prod_{j=0}^{N-1}\sinh\left(\omega^{j}z\right) &= \prod_{j=0}^{N-1}\prod_{n=1}^{\infty}\imath z\omega^j\left(1+ \frac{(z\omega^j)^2}{n^2\pi^2}\right) \\
    &= (\imath z)^N \omega^{\frac{N(N-1)}{2}}\prod_{n=1}^{\infty}\prod_{j=0}^{N-1}\left(1+ \frac{(z\omega^j)^2}{n^2\pi^2}\right) \\
    &= (\imath z)^N  \omega^{\frac{N(N-1)}{2}} \prod_{n=1}^{\infty} \left(1- (-1)^N \frac{z^{2N}}{\pi^{2N}n^{2N}}\right) \\
    & =  (-1)^{(N+1)} \imath \sum_{k=0}^{\infty} (-1)^{(N+1)k}\zeta(\underbrace{2N,\ldots,2N}_{k})\frac{z^{2kN+N}}{\pi^{2kN}}.
\end{align*}
Using \eqref{gf}, we deduce the formula 
\begin{align}\label{mzv}
    \alpha_{k}^{(N)} = \imath \frac{(-1)^{(N+1)(k+1)}(2kN+N)!}{\pi^{2kN}}\zeta(\{2N\}_{k}).
\end{align}
\end{proof}
Borwein et al. \cite{Borwein} have computed the first values of $\zeta(\{2N\}_k)$ as
\[
\zeta(\{2\}_k)=\frac{2(2\pi)^{2k}}{(2k+1)!}\left(\frac{1}{2}\right)^{2k+1},\,\,
\zeta(\{4\}_k)=\frac{4(2\pi)^{4k}}{(4k+2)!}\left(\frac{1}{2}\right)^{2k+1}
\]
\[
\zeta(\{6\}_k)=\frac{6(2\pi)^{6k}}{(6k+3)!},\,\,
\zeta(\{8\}_k)=\frac{8(2\pi)^{8k}}{(8k+4)!}\left( \left(1+\frac{1}{\sqrt{2}}\right)^{4k+2}+\left(1-\frac{1}{\sqrt{2}}\right)^{4k+2}\right)
\]
\[
\zeta(\{10\}_k)=\frac{10(2\pi)^{10k}(L_{10k+5}+1)}{(10k+5)!},\,\,
\zeta(\{12\}_k)=\frac{12(2\pi)^{12k}}{(12k+6)!}\left( \left(2-\sqrt{3}\right)^{6k+3}+\left(2+\sqrt{3}\right)^{6k+3}+2^{6k+3}\right)
\]
with $\{L_n\}$ the sequence of Lucas numbers with $L_n=L_{n-1}+L_{n-2}$ and $L_1=1,\,\,L_2=3.$

Notice that Riordan \cite[p.139]{Riordan} produces the explicit $5-$interval expression for Bernoulli numbers.
\begin{example}
For $N=2,$ $\omega=e^{\imath \frac{\pi}{2}},$
\[
\sinh z\sinh\left(\imath z\right)=\imath\sum_{k\ge0}\left(-1\right)^{k}\frac{2^{2k+1}}{\left(4k+2\right)!}z^{4k+2}
\]
so that 
\[
\alpha_{k}^{\left(2\right)}=\imath\left(-1\right)^{k}2^{2k+1},
\]
whereas, for $N=6,$ with $\omega=e^{\imath\frac{\pi}{6}},$
\[
\sinh z\sinh\left(\omega z\right)\dots\sinh\left(\omega^{5}z\right)=12\imath\sum_{k\ge0}\left(-1\right)^{k}2^{12k}\frac{\left(2-\sqrt{3}\right)^{6k+3}+2^{6k+3}+\left(2+\sqrt{3}\right)^{6k+3}}{\left(12k+6\right)!}z^{12k+6}
\]
so that 
\[
\alpha_{k}^{\left(6\right)}=12\imath\left(-1\right)^{k}2^{12k}\left[\left(2-\sqrt{3}\right)^{6k+3}+2^{6k+3}+\left(2+\sqrt{3}\right)^{6k+3}\right].
\]
These two results recover those derived previously.
\end{example}
\subsection{a generalization}
The next step is a parametric version of Ramanujan's identity that allows  us to generalize it to sequences other than Bernoullis. Notice that Lehmer \cite{Lehmer} gives lacunary identities for Euler, Genocchi and Lucas numbers.

With $0\le i\le 2^N-1$, let us denote $\epsilon^{i}=\left\{\epsilon_{0}^{i},\dots,\epsilon_{N-1}^{i}\right\}$  one of the  $2^N$ possible configurations such that $\epsilon_j^i = \pm 1$ with $0\le j\le N-1,$ and define the signature $\pi(\epsilon^i )\in \left\{-1,1\right\}$ of configuration $\epsilon^i$ as $$\pi(\epsilon^i)=\prod_{j=0}^{N-1}\epsilon_j^i.$$ 
\begin{theorem}\label{lattice_sum}
We have the polynomial identity
\begin{equation}
\label{polynomial}
\sum_{k\ge0}\alpha_{k}^{\left(N\right)}\binom{2nN+N+2p}{2kN+N}w^{2nN-2kN+2p}=
\frac{1}{2^{N}}
\sum_{i=0}^{2^N-1}
\pi\left(\epsilon^i\right)
\left(w+\sum_{j=0}^{N-1}\epsilon_j^i \omega^{j}\right)^{2nN+N+2p}.
\end{equation}
\end{theorem}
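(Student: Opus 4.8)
The plan is to obtain \eqref{polynomial} as two different readings of the coefficient of $z^{2nN+N+2p}/(2nN+N+2p)!$ in one and the same generating function, namely $e^{wz}\prod_{j=0}^{N-1}\sinh(\omega^{j}z)$. Everything reduces to a single algebraic expansion of the product of hyperbolic sines, followed by two elementary coefficient extractions.

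First I would rewrite the product $\prod_{j=0}^{N-1}\sinh(\omega^{j}z)$ as a signed sum of pure exponentials. Writing each factor as $\sinh(\omega^{j}z)=\tfrac12\bigl(e^{\omega^{j}z}-e^{-\omega^{j}z}\bigr)=\tfrac12\sum_{\epsilon_j=\pm1}\epsilon_j e^{\epsilon_j\omega^{j}z}$ and multiplying out the $N$ factors gives
\[
\prod_{j=0}^{N-1}\sinh\left(\omega^{j}z\right)=\frac{1}{2^{N}}\sum_{i=0}^{2^{N}-1}\pi\!\left(\epsilon^{i}\right)e^{\,z\sum_{j=0}^{N-1}\epsilon_j^{i}\omega^{j}},
\]
since the product of the chosen signs over the $N$ factors is exactly the signature $\pi(\epsilon^i)=\prod_j\epsilon_j^i$. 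This is the structural core of the statement: up to the factor $2^{-N}$, the product of hyperbolic sines is a signed superposition of exponentials $e^{zc_i}$ with exponents $c_i=\sum_j\epsilon_j^i\omega^{j}$, each sign configuration $\epsilon^i$ contributing one term weighted by its signature.

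Next I would multiply both sides by $e^{wz}$ and extract the coefficient of $z^{M}/M!$ with $M=2nN+N+2p$. On the right the computation is immediate: the coefficient of $z^{M}/M!$ in $e^{z(w+c_i)}$ is $(w+c_i)^{M}$, so the right-hand side yields $2^{-N}\sum_{i}\pi(\epsilon^i)\bigl(w+\sum_j\epsilon_j^{i}\omega^{j}\bigr)^{M}$, which is precisely the right-hand side of \eqref{polynomial}. On the left I would combine the defining series \eqref{gf} for $\alpha_k^{(N)}$ with $e^{wz}=\sum_m w^m z^m/m!$; the Cauchy product selects the pairs with $m+2kN+N=M$, and the factor $M!/\bigl((2kN+N)!\,(M-2kN-N)!\bigr)$ assembles into $\binom{M}{2kN+N}$, the surviving power of $w$ being $M-2kN-N=2nN-2kN+2p$. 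This reproduces the left-hand side of \eqref{polynomial}, and equating the two readings finishes the proof.

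There is no real analytic obstacle: both sides are polynomials in $w$, the intermediate identities are equalities of entire functions of $z$, and comparing Taylor coefficients is fully justified. The only step demanding care is bookkeeping, namely keeping the factorials straight in the Cauchy product so that the binomial coefficient emerges with the correct arguments. I would also flag one consistency check rather than an obstacle: with $N$ even the exponent $M=2nN+N+2p$ is even, which matches the fact that the left side of \eqref{polynomial} carries only even powers $w^{2nN-2kN+2p}$; the same parity is visible on the right from the symmetry $\epsilon^{i}\mapsto-\epsilon^{i}$, which fixes $\pi(\epsilon^{i})$ and sends $c_i\mapsto-c_i$. Finally, I would remark that the particular form of $M$ is merely a convenient parametrization: the coefficient-extraction argument delivers the identity for every nonnegative integer exponent.
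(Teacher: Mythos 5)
Your proof is correct and is essentially the paper's own argument: both rest on the same key expansion $\prod_{j=0}^{N-1}\sinh(\omega^{j}z)=2^{-N}\sum_{i=0}^{2^N-1}\pi(\epsilon^{i})\,e^{z\sum_{j}\epsilon_{j}^{i}\omega^{j}}$. The paper merely packages the coefficient extraction differently, substituting $z$ by the operator $\partial=d/dw$ and applying $\prod_{j}\sinh(\omega^{j}\partial)$ to the monomial $w^{2nN+N+2p}$ via Taylor's shift $e^{a\partial}f(w)=f(w+a)$, which is term-for-term equivalent to your multiplication by $e^{wz}$ followed by comparison of the coefficients of $z^{2nN+N+2p}/(2nN+N+2p)!$.
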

\begin{proof}
Sustituting $z$ with the differential operator  $\partial=\frac{d}{dw}$ in \eqref{gf} and applying the resulting operator to  the monomial $w^{2nN+N+2p}$ produces
\[
\prod_{j=0}^{N-1}\sinh\left(\omega^{j}\partial\right)w^{2nN+N+2p}=\sum_{k\ge0}\alpha_{k}^{\left(N\right)}\binom{2nN+N+2p}{2kN+N}w^{2nN-2kN+2p}
\]
Since, as a consequence of the expansion $\sinh{\partial}=\frac{e^{\partial}-e^{-\partial}}{2},$
\[
\prod_{j=0}^{N-1}\sinh\left(\omega^{j}\partial\right)=\frac{1}{2^{N}}
\sum_{i=0}^{2^N-1}
\pi\left(\epsilon^i\right)
\exp\left(\left(\sum_{j=0}^{N-1}\epsilon_j^i \omega^{j}\right)\partial\right),
\]
we deduce, using Taylor's formula $e^{a \partial}f(w)=f(w+a),$
\[
\sum_{k\ge0}\alpha_{k}^{\left(N\right)}\binom{2nN+N+2p}{2kN+N}w^{2nN-2kN+2p}=\frac{1}{2^{N}}
\sum_{i=0}^{2^N-1}
\pi\left(\epsilon^i\right)
\left(w+\sum_{j=0}^{N-1}\epsilon_j^i \omega^{j}\right)^{2nN+N+2p}.
\]
\end{proof}

\subsection{Applications}
A first consequence  of this generalization is another explicit expression for the coefficients $\alpha_{n}^{\left(N\right)},$ now as a multivariate integral.
\begin{proposition*}
An integral representation for the coefficients $\alpha_{n}^{\left(N\right)}$ is
\[
\alpha_{n}^{\left(N\right)}=
\frac{\left(-1\right)^{\frac{N-1}{2}}}{2^N}\frac{\left(2nN+N\right)!}{\left(2nN\right)!}\int_{-1}^{1}\dots\int_{-1}^{1}\left(\sum_{j=0}^{N-1}\omega^{j}u_{j}\right)^{2nN}du_{0}\dots du_{N-1}.
\]
For example, with $N=2,$
\[
\alpha_{n}^{\left(2\right)}=
\frac{\imath}{16}
\frac{\left(4n+2\right)!}{\left(4n\right)!}
\int_{-1}^{1}\int_{-1}^{1}\left(u_{0}+\imath u_{1}\right)^{4n}du_{0}du_{N-1}
=\left(2\imath\right)^{2n+1}.
\]
\end{proposition*}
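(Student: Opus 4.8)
The plan is to start from the defining generating function \eqref{gf} and replace each hyperbolic sine factor by an elementary integral, which turns the product of sines into a single multivariate integral with an exponential integrand; matching Taylor coefficients in $z$ then produces the claimed formula.

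First I would record the integral representation
\[
\sinh\left(\omega^{j}z\right)=\frac{\omega^{j}z}{2}\int_{-1}^{1}e^{\omega^{j}zu_{j}}\,du_{j},
\]
which is nothing but $\int_{-1}^{1}e^{az u}\,du=\frac{2\sinh(az)}{az}$ applied with $a=\omega^{j}$. Substituting this for every factor in \eqref{gf} and collecting the prefactors, the powers of $z$ combine into $z^{N}$, the roots of unity combine into $\omega^{0+1+\cdots+(N-1)}=\omega^{N(N-1)/2}$, and the exponentials merge, so that
\[
\prod_{j=0}^{N-1}\sinh\left(\omega^{j}z\right)
=\frac{\omega^{N(N-1)/2}}{2^{N}}\,z^{N}
\int_{-1}^{1}\!\!\cdots\!\int_{-1}^{1}
\exp\Big(z\sum_{j=0}^{N-1}\omega^{j}u_{j}\Big)\,du_{0}\cdots du_{N-1}.
\]

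Next I would expand the exponential as $\sum_{m\ge 0}\frac{z^{m}}{m!}\big(\sum_{j}\omega^{j}u_{j}\big)^{m}$ and extract the coefficient of $z^{2nN+N}$ on both sides. On the left it equals $\alpha_{n}^{(N)}/(2nN+N)!$ by \eqref{gf}; on the right the factor $z^{N}$ forces $m=2nN$, leaving a factor $1/(2nN)!$. Equating the two and clearing factorials gives
\[
\alpha_{n}^{(N)}=\frac{\omega^{N(N-1)/2}}{2^{N}}\frac{(2nN+N)!}{(2nN)!}
\int_{-1}^{1}\!\!\cdots\!\int_{-1}^{1}\Big(\sum_{j=0}^{N-1}\omega^{j}u_{j}\Big)^{2nN}du_{0}\cdots du_{N-1}.
\]
The proof then closes with the elementary phase identity $\omega^{N(N-1)/2}=e^{\imath\pi(N-1)/2}=\imath^{N-1}=(-1)^{(N-1)/2}$, and the stated $N=2$ example follows by a direct two-dimensional integration.

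I would also point out that the same expression is a transparent consequence of Theorem \ref{lattice_sum}, matching the way the statement is introduced: representing the operator $\prod_{j}\sinh(\omega^{j}\partial)$ used in its proof through the operator analogue $\sinh(\omega^{j}\partial)=\frac{\omega^{j}\partial}{2}\int_{-1}^{1}e^{\omega^{j}\partial u_{j}}\,du_{j}$, applying it to $w^{2nN+N}$, and then evaluating at $w=0$ with $p=0$, so that only the $k=n$ term on the left-hand side of \eqref{polynomial} survives while $\partial^{N}$ supplies the factorial ratio $\tfrac{(2nN+N)!}{(2nN)!}$. In either route the computation is entirely elementary; I do not expect a genuine obstacle, since every factor is entire and one is merely matching power-series coefficients. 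The only points that require care are the bookkeeping of the global phase $\omega^{N(N-1)/2}$ and its rewriting as $(-1)^{(N-1)/2}$, and tracking the normalization $\frac{(2nN+N)!}{(2nN)!}$ that arises from the $z^{N}$ (respectively $\partial^{N}$) prefactor against the $1/(2nN)!$ coming from the exponential series.
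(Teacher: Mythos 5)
Your main argument is correct, and it takes a genuinely different route from the paper's. The paper never returns to the generating function \eqref{gf}: it evaluates the multiple integral by iterated integration, which produces the signed sum $\prod_{j=0}^{N-1}\omega^{-j}\,\tfrac{(2nN)!}{(2nN+N)!}\sum_{i}\pi(\epsilon^i)\bigl(\sum_{j}\epsilon_j^i\omega^j\bigr)^{2nN+N}$ over the $2^N$ sign configurations, and then identifies this with the lattice-sum expression $\alpha_n^{(N)}=\tfrac{1}{2^N}\sum_i\pi(\epsilon^i)\bigl(\sum_j\epsilon_j^i\omega^j\bigr)^{2nN+N}$ obtained by setting $w=0$ and $p=0$ in \eqref{polynomial}. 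You instead insert $\sinh(\omega^jz)=\tfrac{\omega^jz}{2}\int_{-1}^{1}e^{\omega^jzu_j}\,du_j$ into each factor, collapse the product into a single integral of $e^{z\sum_j\omega^ju_j}$, and match the coefficient of $z^{2nN+N}$ directly against \eqref{gf}; the $2^N$-term expansion and \eqref{polynomial} are bypassed entirely. Your route is more self-contained, and its bookkeeping is transparent: the prefactor emerges as $\omega^{N(N-1)/2}=\imath^{N-1}$ uniformly in $N$, which equals $(-1)^{(N-1)/2}$ for odd $N$ and equals $\imath$ for $N=2$; in particular your formula shows the prefactor in the printed $N=2$ example should be $\tfrac{\imath}{4}$ rather than $\tfrac{\imath}{16}$ (only then does one obtain $(2\imath)^{2n+1}$), and it also supplies the factor $(2nN)!$ that the paper's intermediate display of the integral's value omits. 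What the paper's route buys is the explicit tie between the integral and the sign-configuration lattice sums that organize that section of the paper; your closing remark --- pushing the integral representation of $\sinh$ through the operator identity behind \eqref{polynomial} and evaluating at $w=0$, $p=0$ --- is essentially the paper's proof read in the opposite direction, so you have in effect produced both arguments. The one point to phrase carefully is that $\imath^{N-1}=(-1)^{(N-1)/2}$ is literal only for odd $N$; for even $N$ the statement's $(-1)^{(N-1)/2}$ must be read as $\imath^{N-1}$, exactly as the statement itself does in its $N=2$ example.
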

\begin{proof}
    The multiple integral in easily computed -- directly or by induction -- as
    \[
    \frac{\prod_{j=0}^{N-1}\frac{1}{\omega^j}}{(2nN+N)!}
    \sum_{i=0}^{2^N-1}\pi(\epsilon^i)
    \left(\sum_{j=0}^{N-1}
    \epsilon_j^i \omega^j
    \right)^{2nN+N},
    \]
 with   
$ \prod_{j=0}^{N-1}\frac{1}{\omega^j} = (-1)^{\frac{N-1}{2}}.$ 
Setting $w=0$ and $p=0$ in \eqref{polynomial} produces
\[
\alpha_{n}^{\left(N\right)}
=
\frac{1}{2^{N}}
\sum_{i=0}^{2^N-1}
\pi\left(\epsilon^i\right)
\left(\sum_{j=0}^{N-1}\epsilon_j^i \omega^{j}\right)^{2nN+N}
\]
and the desired result follows by identification of both expressions.
\end{proof}

Formula \eqref{polynomial} allows, for example, an extension of Ramanujan's identity to Bernoulli polynomials.
\begin{proposition*}
The  Bernoulli polynomials  $B_n(w)$ satisfy the identity
\[
\sum_{k\ge0}\alpha_{k}^{\left(N\right)}\binom{2nN+N+2p}{2kN+N}B_{2nN-2kN+2p}(w)
\]
\[
=\frac{2nN+N+2p}{2^{N}}
\sum_{i=0}^{2^{N-1}-1}
\pi\left(\epsilon^i\right)
\left(\left(w+\sum_{j=1}^{N-1}\epsilon_j^i \omega^{j}\right)^{2nN+N+2p-1}
+\left(w-1+\sum_{j=1}^{N-1}\epsilon_j^i \omega^{j}\right)^{2nN+N+2p-1}\right).
\]
\end{proposition*}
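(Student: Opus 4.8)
The plan is to recognize the left-hand side as the image of a single Bernoulli polynomial under the operator $\prod_{j=0}^{N-1}\sinh(\omega^j\partial)$ that already appeared in the proof of Theorem \ref{lattice_sum}, and then to exploit the integrality of the shift $\omega^0=1$ to collapse the resulting sum of shifted Bernoulli polynomials into monomials by means of \eqref{shift_Bernoulli}. First I would record the derivative rule $B_m'(w)=mB_{m-1}(w)$, obtained by differentiating \eqref{Bernoulli} with respect to $z$ and comparing coefficients; iterating it gives $\frac{1}{(2kN+N)!}\partial^{2kN+N}B_m(w)=\binom{m}{2kN+N}B_{2nN-2kN+2p}(w)$ with $m=2nN+N+2p$. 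Summing these against the coefficients $\alpha_k^{(N)}$ and invoking the expansion \eqref{gf} of $\prod_{j=0}^{N-1}\sinh(\omega^j z)$ with $z$ replaced by $\partial$, I would identify the left-hand side as exactly $\prod_{j=0}^{N-1}\sinh(\omega^j\partial)\,B_m(w)$.

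Next I would invoke the operator factorization established in the proof of Theorem \ref{lattice_sum}, namely $\prod_{j=0}^{N-1}\sinh(\omega^j\partial)=\frac{1}{2^N}\sum_{i=0}^{2^N-1}\pi(\epsilon^i)\exp((\sum_{j=0}^{N-1}\epsilon_j^i\omega^j)\partial)$, together with Taylor's formula $e^{a\partial}f(w)=f(w+a)$. This rewrites the left-hand side as $\frac{1}{2^N}\sum_{i=0}^{2^N-1}\pi(\epsilon^i)B_m(w+\sum_{j=0}^{N-1}\epsilon_j^i\omega^j)$, a sum of $2^N$ shifted Bernoulli polynomials.

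The decisive step, and the only one requiring care, is the pairing that uses $\omega^0=1$. I would group the $2^N$ configurations into $2^{N-1}$ pairs sharing the same tail $(\epsilon_1^i,\dots,\epsilon_{N-1}^i)$ and differing only in $\epsilon_0^i=\pm1$; since $\omega^0=1$, the two members of a pair are shifted by $+1$ and $-1$ relative to $w+\sum_{j=1}^{N-1}\epsilon_j^i\omega^j$. Writing $S=\sum_{j=1}^{N-1}\epsilon_j^i\omega^j$ and $u=w-1+S$, and noting that $\epsilon_0^i$ contributes its sign to $\pi(\epsilon^i)$, each pair contributes $\pi(\epsilon^i)[B_m(u+2)-B_m(u)]$, where $\pi(\epsilon^i)$ is now the signature of the tail over the reduced index set $0\le i\le 2^{N-1}-1$. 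Two applications of \eqref{shift_Bernoulli} yield $B_m(u+2)-B_m(u)=m[(u+1)^{m-1}+u^{m-1}]=m[(w+S)^{m-1}+(w-1+S)^{m-1}]$, which is precisely the bracketed expression on the right-hand side with $m=2nN+N+2p$. The main obstacle is purely bookkeeping: one must check that splitting off the $j=0$ coordinate correctly halves the index set while leaving the tail signature intact, and that the unit integer shift coming from $\omega^0$ is exactly what turns the difference of Bernoulli polynomials into a genuine polynomial in $w$, in keeping with the integral-lattice mechanism of Section 2.
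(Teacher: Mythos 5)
Your proposal is correct and is essentially the paper's own argument: both reduce the left-hand side to the signed sum $\frac{1}{2^{N}}\sum_{i=0}^{2^{N}-1}\pi(\epsilon^{i})\,B_{m}\bigl(w+\sum_{j=0}^{N-1}\epsilon_{j}^{i}\omega^{j}\bigr)$ with $m=2nN+N+2p$ (the paper via the umbral substitution $w\mapsto w+B$ in the statement of Theorem \ref{lattice_sum}, you by applying $\prod_{j=0}^{N-1}\sinh(\omega^{j}\partial)$ directly to $B_{m}(w)$, which is the same mechanism), and then both conclude by pairing configurations on $\epsilon_{0}=\pm1$ and applying the shift property \eqref{shift_Bernoulli} twice to obtain $B_{m}(u+2)-B_{m}(u)=m\left[(u+1)^{m-1}+u^{m-1}\right]$. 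The only difference is presentational: you inline the operator factorization from the proof of Theorem \ref{lattice_sum} rather than invoking its statement umbrally, and your bookkeeping of the halved index set and the tail signature is exactly the paper's.
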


\begin{proof}
Still using the symbolic notation $(B+w)^n$ for the Bernoulli polynomial $B_n(w),$ let us replace $w$ with $w+B$ in  identity \eqref{polynomial}, producing for the left-hand side
\[
\sum_{k\ge0}\alpha_{k}^{\left(N\right)}\binom{2nN+N+2p}{2kN+N}B_{2nN-2kN+2p}(w).
\]
With $\omega^0=1$ and $\epsilon_j^i=\pm1,$ we rewrite the right-hand side as
\[
\frac{1}{2^{N}}
\sum_{i=0}^{2^{N-1}-1}
\pi\left(\epsilon^i\right)
\left(w+1+\sum_{j=1}^{N-1}\epsilon_j^i \omega^{j}\right)^{2nN+N+2p}
+\frac{1}{2^{N}}
\sum_{i=0}^{2^{N-1}-1}
\pi\left(\epsilon^i\right)
\left(w-1+\sum_{j=1}^{N-1}\epsilon_j^i \omega^{j}\right)^{2nN+N+2p}
\]
\[
=
\frac{1}{2^{N}}
\sum_{i=0}^{2^{N-1}-1}
\pi\left(\epsilon^i\right)
\left(\left(w+1+\sum_{j=1}^{N-1}\epsilon_j^i \omega^{j}\right)^{2nN+N+2p}
-
\left(w-1+\sum_{j=1}^{N-1}\epsilon_j^i \omega^{j}\right)^{2nN+N+2p}
\right).
\]
Since $\left(B+w+1\right)^{n}-\left(B+w-1\right)^{n}=B_n(w+1)-B_n(w-1)=
n\left[w^{n-1}+\left(w-1\right)^{n-1}\right],$
we deduce
\[
\sum_{k\ge0}\alpha_{k}^{\left(N\right)}\binom{2nN+N+2p}{2kN+N}B_{2nN-2kN+2p}(w)=\frac{2nN+N+2p}{2^{N}}
\]
\[
\sum_{i=0}^{2^{N-1}-1}
\pi\left(\epsilon^i\right)
\left(\left(w+\sum_{j=1}^{N-1}\epsilon_j^i \omega^{j}\right)^{2nN+N+2p-1}
+\left(w-1+\sum_{j=1}^{N-1}\epsilon_j^i \omega^{j}\right)^{2nN+N+2p-1}\right).
\]
\end{proof}
As another application of formula \eqref{polynomial}, we propose the following result that relates a convolution of multisected Barnes zeta functions 
 \[
\zeta_{2}\left(w,p\right)=\sum_{m_{1},m_{2}\ge0}\frac{1}{\left(w+m_{1}+\imath m_{2}\right)^{p}},
\]
to the Hurwitz zeta function
 \[
\zeta\left(w,p\right)=\sum_{m\ge0}\frac{1}{\left(w+m\right)^{p}}.
\]
\begin{proposition*}
For $p>1,$  the Barnes zeta function 
satisfies the lacunary identity
    \[
\imath^p\sum_{k\ge0}(-1)^{k}\binom{4k+p+1}{p-1}2^{2k+1}\zeta_{2}\left(w,p+4k+2\right)
\]
\[
=\zeta(-\imath w,p)-(-1)^p\zeta(\imath w,p)
+
\zeta(-\imath (w-1),p)-(-1)^p\zeta(\imath (w-1),p).
\]
\end{proposition*}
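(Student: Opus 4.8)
The plan is to run the generating-function machinery of Theorem~\ref{lattice_sum} at $N=2$, but fed with a negative power instead of a monomial. Setting $\omega=\imath$, the operator that appears in the proof of Theorem~\ref{lattice_sum} reads
\[
\sinh(\partial)\sinh(\imath\partial)=\tfrac14\left(e^{(1+\imath)\partial}-e^{(1-\imath)\partial}-e^{(-1+\imath)\partial}+e^{(-1-\imath)\partial}\right),\qquad \partial=\tfrac{d}{dw},
\]
while its Taylor expansion is governed by the value $\alpha_k^{(2)}=\imath(-1)^k 2^{2k+1}$ recorded in the Example, so that by \eqref{gf} with $N=2$ one has $\sinh(\partial)\sinh(\imath\partial)=\imath\sum_{k\ge0}(-1)^k2^{2k+1}\tfrac{\partial^{4k+2}}{(4k+2)!}$. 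The idea is to apply both avatars of this single operator to $(w+m_1+\imath m_2)^{-p}$ and to sum the outcome over the quarter-lattice $m_1,m_2\ge0$ defining $\zeta_2$.

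First I would use the Taylor form. Since $\partial^{4k+2}(w+c)^{-p}=\frac{(p+4k+1)!}{(p-1)!}(w+c)^{-p-4k-2}$ and $\frac{(p+4k+1)!}{(p-1)!\,(4k+2)!}=\binom{4k+p+1}{p-1}$, applying the operator to $(w+m_1+\imath m_2)^{-p}$ and summing over $m_1,m_2\ge0$ reproduces, term by term in $k$, the Barnes zeta $\zeta_2(w,p+4k+2)$ weighted by $(-1)^k2^{2k+1}\binom{4k+p+1}{p-1}$. This is exactly the left-hand side of the claimed identity, the global constant $\imath^p$ being carried along and reinstated at the end.

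Next I would evaluate the same quantity through the shift form. Each operator $e^{(\pm1\pm\imath)\partial}$ translates the base point, so that after summation over the quarter-lattice the four terms rearrange into boundary contributions: the translations should collapse the interior of the lattice and leave semi-infinite sums running along the two imaginary rays issued from $w$ and from $w-1$. The elementary rescaling $(m\mp\imath w)^{-p}=\imath^{\pm p}(w\pm\imath m)^{-p}$ then turns each imaginary ray sum $\sum_{m\ge0}(w\pm\imath m)^{-p}$ into a Hurwitz value $\zeta(\mp\imath w,p)$, and likewise for the base point $w-1$; this is what generates the overall prefactor $\imath^p$ as well as the sign $(-1)^p$ attached to $\zeta(\imath w,p)$ and $\zeta(\imath(w-1),p)$. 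Matching the Taylor evaluation against the shift evaluation yields the stated identity.

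The delicate point, and the step I expect to cost the most work, is the rearrangement in the shift evaluation: one must show that the translated quarter-lattice sums recombine into \emph{exactly} the four semi-infinite imaginary boundary rays, with the correct signs, rather than into some other boundary configuration. Concretely this requires justifying the interchange of the $k$-summation with the double lattice sum, controlling convergence (legitimate for $\Re p$ large, where $\zeta_2$ converges absolutely), and then propagating the resulting identity down to all $p>1$ by analytic continuation. The interplay between the diagonal translations $\pm1\pm\imath$ and the axis-aligned lattice is precisely what makes this bookkeeping subtle, so I would treat that reduction carefully and only afterwards reinstate the $\imath^p$ normalization to recover the displayed form.
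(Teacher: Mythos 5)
Your overall architecture --- the two avatars of the operator $\sinh(\partial)\sinh(\imath\partial)$ (which is just $\imath\,\sin\partial\sinh\partial$, the operator the paper uses), the per-point identity, and summation over the quarter-lattice --- is sound and is exactly the paper's route. Up to the reduction
\[
\sum_{k\ge0}(-1)^{k}\binom{4k+p+1}{p-1}2^{2k+1}\zeta_{2}(w,p+4k+2)
=\frac{1}{4\imath}\Bigl[\zeta_{2}(w+1+\imath,p)-\zeta_{2}(w-1+\imath,p)-\zeta_{2}(w+1-\imath,p)+\zeta_{2}(w-1-\imath,p)\Bigr]
\]
your sketch is correct. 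The genuine gap is the step you deferred: the claim that these four translated quarter-lattice sums recombine into four semi-infinite imaginary rays. They do not. Writing lattice points as $w+a+\imath b$, the diagonal shifts move \emph{both} indices: $\zeta_2(w+1+\imath,p)$ runs over $a\ge1,\ b\ge1$, $\zeta_2(w-1+\imath,p)$ over $a\ge-1,\ b\ge1$, and so on. The alternating combination is therefore governed by the Iverson-bracket factorization
\[
\bigl([a\ge1]-[a\ge-1]\bigr)\bigl([b\ge1]-[b\ge-1]\bigr)
=[a\in\{-1,0\}]\,[b\in\{-1,0\}],
\]
so all rays cancel and only the $2\times2$ block of corner points $w,\ w-1,\ w-\imath,\ w-1-\imath$ survives. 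For $p>2$ every series involved converges absolutely, so there is no rearrangement freedom; the method actually proves the finite identity
\[
\sum_{k\ge0}(-1)^{k}\binom{4k+p+1}{p-1}2^{2k+1}\zeta_{2}(w,p+4k+2)
=\frac{1}{4\imath}\Bigl[w^{-p}+(w-1)^{-p}+(w-\imath)^{-p}+(w-1-\imath)^{-p}\Bigr],
\]
not the stated combination of Hurwitz zeta values.

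The semi-infinite rays you are after would arise from a \emph{single} difference, e.g. $\zeta_2(w+1,p)-\zeta_2(w-1,p)=-\sum_{b\ge0}\bigl[(w+\imath b)^{-p}+(w-1+\imath b)^{-p}\bigr]$; the second difference, in the $\imath$ direction, collapses those rays to their endpoints. This is, in fact, exactly where the paper's own proof goes astray: its telescoping treats $\zeta_2(w\pm1+\imath,p)$ as if the $+\imath$ shift did not move the $m_2$ index, flips a sign, and silently drops the factor $1/(4\imath)$, which is how the Hurwitz zetas and the prefactor $\imath^p$ appear there. So your instinct that this recombination is ``the delicate point'' was right, but the recombination as you stated it is false, and no amount of care with interchanging summations or with analytic continuation in $p$ can rescue it: for $p>2$ the stated right-hand side is quantitatively incompatible with the correct finite identity above. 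As written, your proposal does not prove the proposition; carried out honestly, it disproves it (and exposes an error in the paper's statement and proof).
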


\begin{proof}
We obtain first
\[
\sum_{k\ge0}(-1)^{k}\binom{4k+p+1}{p-1}2^{2k+1}x^{-p-4k-2}
\]
\[
=\frac{1}{4\imath}\left[\left(\left(x+1+\imath\right)^{-p}-\left(x-1+\imath\right)^{-p}\right)-\left(\left(x+1-\imath\right)^{-p}-\left(x-1-\imath\right)^{-p}\right)\right]
\]
as a consequence of the application of the differential operator
\[
\sin\partial\sinh\partial=\sum_{k\ge0}\left(-1\right)^{k}\frac{2^{2k+1}}{\left(4k+2\right)!}\partial^{4k+2}
\]
to the negative power $x^{-p}.$ 
Replacing $x$ with $w+m_{1}+\imath m_{2}$ and summing over $m_{1}$
and $m_{2}$ produces
\[
\sum_{k\ge0}(-1)^{k}\binom{4k+p+1}{p-1}2^{2k+1}\zeta_{2}\left(w,p+4k+2\right)=
\]
\[
\frac{1}{4\imath}\left[\zeta_{2}\left(w+1+\imath,p\right)-\zeta_{2}\left(w-1+\imath,p\right)\right]
-\frac{1}{4\imath}\left[\zeta_{2}\left(w+1-\imath,p\right)-\zeta_{2}\left(w-1-\imath,p\right)\right].
\]
But, by telescoping,
\[
\zeta_{2}\left(w+1+\imath,p\right)-\zeta_{2}\left(w-1+\imath,p\right)
\]
\[
=\sum_{m_{1}\ge1,m_{2}\ge0}\frac{1}{\left(w+m_{1}+\imath m_{2}\right)^{p}}-\sum_{m_{1}\ge-1,m_{2}\ge0}\frac{1}{\left(w+m_{1}+\imath m_{2}\right)^{p}}
=\sum_{m_{2}\ge0}\frac{1}{\left(w+\imath m_{2}\right)^{p}}+\sum_{m_{2}\ge0}\frac{1}{\left(w-1+\imath m_{2}\right)^{p}}
\]
while accordingly
\[
\zeta_{2}\left(w+1-\imath,p\right)-\zeta_{2}\left(w-1-\imath,p\right)
=\sum_{m_{2}\ge0}\frac{1}{\left(w-\imath m_{2}\right)^{p}}+\sum_{m_{2}\ge0}\frac{1}{\left(w-1-\imath m_{2}\right)^{p}}
\]
so that
\[
\sum_{k\ge0}(-1)^{k}\binom{4k+p+1}{p-1}2^{2k+1}\zeta_{2}\left(w,p+4k+2\right)=
\]
\[
=\sum_{m_{2}\ge0}\frac{1}{\left(w+\imath m_{2}\right)^{p}}+\sum_{m_{2}\ge0}\frac{1}{\left(w-1+\imath m_{2}\right)^{p}}
-\sum_{m_{2}\ge0}\frac{1}{\left(w-\imath m_{2}\right)^{p}}-\sum_{m_{2}\ge0}\frac{1}{\left(w-1-\imath m_{2}\right)^{p}}
\]
and the final result is deduced after simplification.
\end{proof}

\section{Lacunary series for Eisenstein series}
A last extension of Ramanujan's lacunary identity is now provided. It involves Eisenstein series. 

Given a lattice $\Lambda = \langle w_1,w_2\rangle$ such that $\Im\left(\frac{w_2}{w_1}\right)>0$, the corresponding Weierstra{\ss} sigma function $\sigma(z;\Lambda)$ and  Weierstra{\ss} zeta function $\zeta(z;\Lambda)$ are defined by
\begin{align*}
& \sigma(z;\Lambda) \coloneqq z \prod_{\omega \in \Lambda \setminus \{0\}}
\left(1-\frac{z}{\omega}\right)
\exp\!\left(\frac{z}{\omega}+\frac{1}{2}\left(\frac{z}{\omega}\right)^{2}\right), \\
&  \zeta(z, \Lambda) \coloneqq \frac{\sigma'(z;\Lambda)}{\sigma(z;\Lambda)} = \frac{1}{z} + \sum_{\omega \in \Lambda^{*}} \left(\frac{1}{z-\omega} +\frac{1}{\omega} + \frac{z}{\omega^2}\right).
\end{align*}
For even \(k\ge2\), let the Eisenstein series
\begin{align*}
    G_{2k}(\Lambda) = \sum_{w \in \Lambda^\ast} \frac{1}{w^{2k}}.
\end{align*}
They are the Taylor coefficients of the analytic part of the Weierstra{\ss} zeta function $\zeta(z;\Lambda)$:
\begin{align*}
    \zeta  {(z;\Lambda )}={\frac {1}{z}}-\sum _{k=1}^{\infty }{G}_{2k+2}\left(\Lambda\right)z^{2k+1}.
\end{align*}
\begin{prop}
\label{dim2}
    Let $\Lambda$ be a lattice and $N\in\mathbb{N}$.
    Then \[ N\left(1-\sum_{j\ge1}G_{2Nj}(\Lambda)\,z^{2Nj}\right)\; \sum_{n\ge0}(-1)^N a_{(2n+1)N}{(\Lambda)}\, \frac{z^{(2n+1)N}}{((2n+1)N)!} \;=\; \sum_{n\ge0}(-1)^N a_{(2n+1)N}{(\Lambda)}\, \frac{z^{(2n+1)N}}{((2n+1)N-1)!}, \] where the generating function of coefficients $a_{n}(\Lambda)$ satisfies
\[\frac{\imath z^{N}}{2N}\exp\!\left(-\frac{1}{2}\sum_{m=1}^{\infty}\frac{G_{2Nm}(\Lambda)}{m}\,z^{2Nm}\right) = \sum_{n\ge0}(-1)^N a_{(2n+1)N}{(\Lambda)}\,
\frac{z^{(2n+1)N}}{((2n+1)N)!}.\]
\end{prop}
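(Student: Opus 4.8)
The plan is to follow the template of Proposition~\ref{dim1}, substituting for the multisected Bernoulli generating function a multisection of the Weierstra{\ss} zeta function, which plays here the role that $\frac{w}{e^{w}-1}$ played there. First I would record the Taylor expansion of the analytic part, $w\,\zeta(w;\Lambda)=1-\sum_{j\ge2}G_{2j}(\Lambda)w^{2j}$, read off directly from the Laurent series of $\zeta$. Applying the multisection formula~\eqref{general multisection} with $q=2N$, $p=0$ and $\omega=e^{\imath\pi/N}$, the geometric sum $\sum_{k=0}^{2N-1}\omega^{2jk}$ vanishes unless $2N\mid 2j$, so only the exponents divisible by $2N$ survive and
\[
\Psi(z):=1-\sum_{j\ge1}G_{2Nj}(\Lambda)\,z^{2Nj}=\frac{1}{2N}\sum_{k=0}^{2N-1}\omega^{k}z\,\zeta(\omega^{k}z;\Lambda).
\]
This is the exact elliptic analogue of the function $\psi$ in Proposition~\ref{dim1}.

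Next I would reformulate the claimed identity as a differential equation. Writing $f(z)=\sum_{n\ge0}(-1)^{N}a_{(2n+1)N}(\Lambda)\,\frac{z^{(2n+1)N}}{((2n+1)N)!}$ for the right-hand factor, one checks that $zf'(z)$ multiplies the coefficient of index $n$ by $(2n+1)N$, which is precisely the effect of replacing $((2n+1)N)!$ by $((2n+1)N-1)!$. Hence the right member of the stated identity equals $zf'(z)$, and the whole proposition collapses to the single relation $Nf(z)\Psi(z)=zf'(z)$, exactly as in~\eqref{eqn}.

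With this in hand I would solve the equation by logarithmic differentiation: from $\frac{f'(z)}{f(z)}=\frac{N}{z}-N\sum_{j\ge1}G_{2Nj}(\Lambda)z^{2Nj-1}$, termwise integration gives $\log f(z)=N\log z-\tfrac12\sum_{m\ge1}\frac{G_{2Nm}(\Lambda)}{m}z^{2Nm}+\mathrm{const}$. Exponentiating and fixing the integration constant by the normalization $f(z)=\frac{\imath z^{N}}{2N}+o(z^{N})$ reproduces verbatim the generating function in the statement. Since $\zeta=\sigma'/\sigma$, the same computation (using that $\sigma$ is odd to pair the roots $\omega^{k}$ and $\omega^{k+N}=-\omega^{k}$) identifies $f$ with a product of rotated sigma functions, $f(z)=\frac{\imath^{\,2-N}}{2N}\prod_{k=0}^{N-1}\sigma(\omega^{k}z;\Lambda)$, the genuine elliptic counterpart of the $\sinh$-product~\eqref{anN}; I would include this as a remark, though it is not needed for the proof itself.

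The step needing the most care is the multisection identity for $\Psi$ together with the bookkeeping of the surviving index set. Because the Laurent expansion of $\zeta$ starts at $G_{4}$, the multisection yields a term $G_{2Nm}$ for every $m\ge1$ only when $N\ge2$; the value $N=1$ would demand a spurious weight-two term and must be excluded or handled by convention, the normalized $G_{2}$ being only conditionally convergent. The remaining delicate point is purely computational: choosing the branch of the logarithm and the sign of the constant so that the leading term comes out as $\frac{\imath z^{N}}{2N}$.
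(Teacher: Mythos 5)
Your proposal is correct, and its skeleton is the same as the paper's: multisect the Weierstra{\ss} zeta function to get $\Phi_N(z)=\frac{z}{2N}\sum_{k=0}^{2N-1}\omega^k\,\zeta(z\omega^k;\Lambda)=1-\sum_{m\ge1}G_{2Nm}(\Lambda)z^{2Nm}$, observe that the proposition is equivalent to the first-order equation $N\Phi_N(z)f(z)=zf'(z)$, and solve that equation with the normalization $f(z)=\frac{\imath z^N}{2N}+o(z^N)$. Where you genuinely diverge is the integration step. The paper integrates the \emph{sigma-function} form of the equation, $\frac{zf'(z)}{f(z)}=\frac{z}{2}\sum_k\omega^k\frac{\sigma'(z\omega^k;\Lambda)}{\sigma(z\omega^k;\Lambda)}$, obtaining $f(z)=C\left(\prod_{k=0}^{2N-1}\sigma(z\omega^k;\Lambda)\right)^{1/2}$, and then needs the product identity \eqref{id1}, $\prod_{k=0}^{2N-1}\sigma(z\omega^k;\Lambda)=-z^{2N}\prod_{w\in\Lambda^*}\left(1-\frac{z^{2N}}{w^{2N}}\right)$, followed by an exp-log expansion, to reach the stated exponential generating function. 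You instead integrate the \emph{Eisenstein-series} form termwise, getting $\log f(z)=N\log z-\frac12\sum_{m\ge1}\frac{G_{2Nm}(\Lambda)}{m}z^{2Nm}+\mathrm{const}$ in one line; the sigma product is demoted to an optional remark (and your formula $f(z)=\frac{\imath^{2-N}}{2N}\prod_{k=0}^{N-1}\sigma(\omega^kz;\Lambda)$ is right, by oddness of $\sigma$ and $\sigma(w)=w+O(w^5)$). Your route is shorter and bypasses \eqref{id1} entirely; the paper's route buys, as a by-product, the Weierstra{\ss}-product form of $f$, which is exactly what its subsequent remark on the $\Im(\tau)\to\infty$ degeneration to Proposition \ref{dim1} relies on. Two further points in your favor: you actually derive the multisection identity for $\Phi_N$ from \eqref{general multisection} and spell out why the right-hand side of the proposition equals $zf'(z)$, both of which the paper merely asserts; and your caveat about $N=1$ is a genuine observation --- since the Laurent expansion of $\zeta(z;\Lambda)$ begins at $G_4$, the multisection produces all terms $G_{2Nm}$, $m\ge1$, only for $N\ge2$, while the paper states the result for all $N\in\mathbb{N}$ without addressing the $G_2$ term.
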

\begin{proof}
Let $\omega=e^{\imath \frac{\pi}{N}}$ and 
\[
\Phi_N(z):=\frac{z}{2N}\sum_{k=0}^{2N-1}\omega^k\,\zeta(z\omega^k;\Lambda)
=1-\sum_{m\ge1}G_{2Nm}(\Lambda)\,z^{2Nm},
\]
and let $f$ satisfy the differential equation
\[
N\,\Phi_N(z)\,f(z)=z\,f'(z)
\]
or, equivalently,
\begin{align*}
    \frac{zf'(z)}{f(z)}= N \Phi_N(z) = \frac{z}{2}\sum_{k=0}^{2N-1} \omega^k\frac{\sigma'(z\omega^k;\Lambda)}{\sigma(z\omega^k;\Lambda)}.
\end{align*}
This equation admits the solution 
\begin{align*}
    f(z) = C\left(\prod_{k=0}^{2N-1}\sigma(z\omega^k; \Lambda)\right)^{\frac{1}{2}}.
\end{align*}
Choose the integration constant $C$ so that $f(z)=\frac{z^N}{2N}+O(z^{3N})$
and notice that 
\begin{align}\label{id1}
\prod_{k=0}^{2N-1} \sigma(z\omega^k;\Lambda) &= z^{2N} \prod_{k=0}^{2N-1} \prod_{w \in \Lambda^*} \left(1-\frac{z\omega}{w}\right)
\exp\!\left(\frac{z\omega}{w}+\frac{1}{2}\left(\frac{z\omega}{w}\right)^{2}\right) \\
&= -z^{2N} \prod_{w \in \Lambda^*} \left(1- \frac{z^{2N}}{w^{2N}}\right). \nonumber
\end{align}
Taking the exp-log transformation gives 
$$f(z) = \frac{\imath z^{N}}{2N}\exp\!\left(-\frac{1}{2}\sum_{m=1}^{\infty}\frac{G_{2Nm}(\Lambda)}{m}\,z^{2Nm}\right),$$
establishing the desired result.
\end{proof}
\begin{proposition*}
An expression for the coefficients $a_n(\Lambda)$ in terms of the Bell polynomials is 
\[
a_{(2n+1)N}(\Lambda)
= \imath\,\frac{(-1)^N}{2N}\,\frac{((2n+1)N)!}{(2N n)!}\,
B_{2N n}\!\big(b_1,\ldots,b_{2Nn}\big),
\qquad n\ge 0.
\]
\end{proposition*}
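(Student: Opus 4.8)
The plan is to read the coefficients $a_{(2n+1)N}(\Lambda)$ straight off the closed-form generating function supplied by Proposition \ref{dim2}, namely
\[
\frac{\imath z^{N}}{2N}\exp\!\left(-\frac{1}{2}\sum_{m\ge1}\frac{G_{2Nm}(\Lambda)}{m}\,z^{2Nm}\right)
=\sum_{n\ge0}(-1)^N a_{(2n+1)N}(\Lambda)\,\frac{z^{(2n+1)N}}{((2n+1)N)!},
\]
by expanding the exponential with the complete (exponential) Bell polynomials, which are defined through the identity
\[
\exp\!\left(\sum_{k\ge1} b_k\,\frac{t^k}{k!}\right)=\sum_{j\ge0} B_j(b_1,\dots,b_j)\,\frac{t^j}{j!}.
\]

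First I would match the exponent against $\sum_{k\ge1} b_k t^k/k!$ with $t=z$. Since the exponent is a power series in $z^{2N}$ alone, every argument whose index is not a multiple of $2N$ must vanish; writing $k=2Nm$ and equating coefficients gives
\[
b_k=\begin{cases}-\,N\,(k-1)!\,G_{k}(\Lambda),&2N\mid k,\\[2pt]0,&\text{otherwise,}\end{cases}
\]
equivalently $b_{2Nm}=-\tfrac{(2Nm)!}{2m}\,G_{2Nm}(\Lambda)$. Feeding these into the exponential formula, and observing that only exponents divisible by $2N$ survive, yields
\[
\exp\!\left(-\frac{1}{2}\sum_{m\ge1}\frac{G_{2Nm}(\Lambda)}{m}\,z^{2Nm}\right)
=\sum_{n\ge0} B_{2Nn}(b_1,\dots,b_{2Nn})\,\frac{z^{2Nn}}{(2Nn)!}.
\]

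Finally I would multiply by $\imath z^{N}/(2N)$ and identify the coefficient of $z^{(2n+1)N}$ on both sides of the generating-function identity, obtaining
\[
\frac{(-1)^N a_{(2n+1)N}(\Lambda)}{((2n+1)N)!}=\frac{\imath}{2N}\,\frac{B_{2Nn}(b_1,\dots,b_{2Nn})}{(2Nn)!},
\]
whence the claimed formula follows upon multiplying through by $(-1)^N$ and by $((2n+1)N)!$. The computation is entirely routine; the only point demanding care is the index bookkeeping — recognizing that because the exponent is a series in $z^{2N}$, it is the Bell polynomial of order $2Nn$ (rather than of order $n$) that records the coefficient of $z^{2Nn}$, and that all of its arguments of index not divisible by $2N$ are zero. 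Beyond this indexing subtlety no genuine obstacle arises, and the normalization $B_0=1$ recovers the $n=0$ case $a_N(\Lambda)=\imath\,(-1)^N N!/(2N)$ as a consistency check.
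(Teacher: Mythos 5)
Your proposal is correct and is essentially the paper's own proof: both expand the exponential factor in the generating function of Proposition \ref{dim2} via the complete Bell polynomials and read off the coefficient of $z^{(2n+1)N}$ after multiplying by $\imath z^{N}/(2N)$. The only difference is your support condition $2N \mid k$ on the arguments $b_k$, which is the condition actually forced by the coefficient matching, whereas the paper writes $N \mid k$; the two agree only insofar as the extra indices $k=Nm$ with $m$ odd carry odd-weight (hence vanishing) Eisenstein series, so your version is the more precise statement, particularly when $N$ is even.
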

\begin{proof}
The complete Bell polynomials are defined by the exponential generating function
\[
\exp\!\Big(\sum_{k=1}^{\infty} x_k\,\frac{t^{k}}{k!}\Big)
= \sum_{n=0}^{\infty} B_n(x_1,\ldots,x_n)\,\frac{t^{n}}{n!}.
\]
Then
\[
\exp\!\left(-\frac{1}{2}\sum_{m=1}^{\infty}\frac{G_{2Nm}(\Lambda)}{m}\,z^{2Nm}\right)
= 1 + \sum_{n=1}^{\infty} \frac{z^{2N n}}{(2N n)!}\,
B_{2N n}\!\big(b_1,\ldots,b_{2Nn}\big),
\]
where
\[
b_k=
\begin{cases}
-\,N\,(k-1)!\,G_{k}(\Lambda), & N\mid k,\\[2mm]
0, & \text{otherwise}.
\end{cases}
\]
This produces the desired expression for $a_n(\Lambda)$ in terms of Bell polynomials.
\end{proof}

\begin{remark}
    Let $\Lambda = \langle 1, \tau \rangle$ with $\Im(\tau)>0$. Noticing that , as $\Im(\tau) \to \infty$ 
    \begin{align*}
        z^N \prod_{w \in \Lambda^\ast} \left(1-\frac{z^{2N}}{w^{2N}}\right)^{1/2} = z^{N}\prod_{k=1}^{\infty}\left(1-\frac{z^{2N}}{k^{2N}}\right) =\imath \frac{(-1)^N}{\pi^N}\prod_{j=0}^{N-1} \sinh(\imath \pi w^{j} z),
    \end{align*}
    we deduce 
    \begin{align*}
        \lim_{\Im(\tau) \to \infty} a_k (\Lambda) = (-1)^N \imath (\imath \pi)^{2kN+N}\alpha_{k}^{(N)}.
    \end{align*}
    Thus the limiting case recovers Proposition \ref{dim1} when $N$ is even.
\end{remark}

\section{The general lattice}

Let $\omega= e^{\frac{i \pi}{N}}$, 
let $\boldsymbol{\omega} = (1,\omega, \ldots, 
\omega^{N-1})$,
let $S(N) = \{\boldsymbol{s}=(s_1,\ldots,s_N)\mid1 \leq i \leq N, \, s_i = \pm 1\}$ and let $\pi(\boldsymbol{s}) = \prod_{i=1}^{N} s_i$. We use the notations 
$$\boldsymbol{\omega}\cdot \boldsymbol{s}   = \sum_{i=1}^{N} s_i \omega^{i-1}\,\,\text{and}\,\,\pi(\boldsymbol{s})=\prod_{i=1}^{N}s_i.$$ 
Using the product to sum formula
\begin{equation}\label{sinh}
    \prod_{j=0}^{N-1}\sinh(\omega^{j} z)
=\begin{cases}
\displaystyle \frac{1}{2^{N}}\sum_{\boldsymbol{s}\in S(N)}\pi(\boldsymbol{s})\,
\sinh\bigl(z\,(\boldsymbol{\omega}\cdot \boldsymbol{s})\bigr), & \text{\(N\) odd},\\[1.25em]
\displaystyle \frac{1}{2^{N}}\sum_{\boldsymbol{s}\in S(N)}\pi(\boldsymbol{s})\,
\cosh\bigl(z\,(\boldsymbol{\omega}\cdot \boldsymbol{s})\bigr), & \text{\(N\) even},
\end{cases}
\end{equation}
the coefficients $\alpha_{k}^{(N)}$ can be written as 
\begin{align*}
    \alpha_{k}^{(N)} = \frac{1}{2^N} \sum_{s \in S(N)} \pi(s)(\boldsymbol{\omega}\cdot \boldsymbol{s})^{2kN+N}.
\end{align*}
The sum includes $2^N$ terms. However, as illustrated in the case $N=6$ above, out of the $2^6=64$ terms, only $36$ are necessary: the others cancel by symmetry. It appears to be difficult to determine which and how many terms cancel in this sum for an arbitrary value of $N.$ However, when $N\ge3$ is a prime number, an explicit expression without redundant  terms can be found for the coefficients $\alpha_k^{(p)}$ as follows.

In order to simplify the sum in equation \eqref{sinh}, we decompose the set $S(N)$ into orbits under the action induced by taking the dot product with  vector $\boldsymbol{\omega}$. In particular, $S(N)$ admits a natural action of the cyclic group $C_{2N} = \langle c \rangle$ of order $2N$ via the map $c \cdot (s_1,\ldots,s_N) = (-s_N, s_1, \ldots, s_{N-1})$. Let $p \geq 3$ be a prime and $N=p$. Since $c^{2p}=1$, the size of an orbit divides $2p$.
It is  clear that $S(p)$ does not have orbits of size $1$ or $p$. Furthermore, the only orbit of size $2$ is $\{(1,1,\ldots,1),(-1,1,\ldots,-1)\}$. As a result, $S(p)$ admits the orbit decomposition
\[S(p) = \{(1,1,\ldots,1),(-1,1,\ldots,-1)\} \bigcup_{k=1}^{m} A_k,\]
where $m = \frac{2^{p-1}-1}{p}$. Let $\{a_i\}_{i=1}^{m}$ be  representatives of the orbits $A_i$. There are $2p$ points in $A_i$ that lie on circle of radius $|a_i|$ and are equispaced by angle $\pi/p$.

\begin{prop}\label{orbit}
    Let $p\geq 3$ be a prime number. Then there holds 
    \begin{align*}
        \alpha_{k}^{(p)} = \frac{p}{2^{p-1}} \sum_{l=1}^{m} \pi(\boldsymbol{a}_l) (\boldsymbol{\omega}\cdot \boldsymbol{a}_l)^{2kp+p},
    \end{align*}
    where $m = \frac{2^{p-1}-1}{p}$.
\end{prop}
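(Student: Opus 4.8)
The plan is to start from the already-established formula
\[
\alpha_{k}^{(p)} = \frac{1}{2^{p}} \sum_{\boldsymbol{s} \in S(p)} \pi(\boldsymbol{s})(\boldsymbol{\omega}\cdot \boldsymbol{s})^{2kp+p},
\]
and to exploit the cyclic group action $C_{2p} = \langle c\rangle$ to collapse the $2^{p}$-term sum onto orbit representatives. The key observation I would use is that the summand $\boldsymbol{s}\mapsto \pi(\boldsymbol{s})(\boldsymbol{\omega}\cdot\boldsymbol{s})^{2kp+p}$ is constant on each orbit. First I would compute how $\pi$ and $\boldsymbol{\omega}\cdot\boldsymbol{s}$ transform under $c$: a direct computation gives $\boldsymbol{\omega}\cdot(c\boldsymbol{s}) = \omega\,(\boldsymbol{\omega}\cdot\boldsymbol{s})$, because the cyclic shift with the sign flip on the wrapped-around coordinate multiplies the dot product by $\omega = e^{\imath\pi/p}$ (here one uses $\omega^{p}=-1$ to absorb the sign). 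Consequently $(\boldsymbol{\omega}\cdot(c\boldsymbol{s}))^{2kp+p} = \omega^{2kp+p}(\boldsymbol{\omega}\cdot\boldsymbol{s})^{2kp+p} = (\boldsymbol{\omega}\cdot\boldsymbol{s})^{2kp+p}$ since $\omega^{2kp+p} = (\omega^{p})^{2k+1} = (-1)^{2k+1}=-1$, and one checks that $\pi(c\boldsymbol{s}) = -\pi(\boldsymbol{s})$; the two signs cancel, so the summand is genuinely $c$-invariant.

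Next I would invoke the orbit decomposition stated just before the proposition:
\[
S(p) = \{(1,\ldots,1),(-1,1,\ldots,-1)\}\;\bigsqcup_{l=1}^{m} A_l,
\qquad m = \frac{2^{p-1}-1}{p}.
\]
The singleton exceptional orbit of size $2$ contributes a term whose dot products are $\boldsymbol{\omega}\cdot(1,\ldots,1) = \sum_{j=0}^{p-1}\omega^{j} = \frac{2}{\omega-1}$; I would check that this contribution vanishes, or more precisely I would verify that the $p\geq 3$ primality forces the size-$2$ orbit contribution to drop out. This is where the main care is needed: I must confirm that the generating function $\prod_{j}\sinh(\omega^{j}z)$ has no term matching the size-$2$ orbit's angular data, equivalently that $\frac{2}{\omega-1}$ lies on a ray that makes its power cancel against $\pi = -1$ in the size-$2$ orbit — this should follow since the two elements of that orbit are related by $c^{p}$ and contribute with opposite signs. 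Each remaining orbit $A_l$ has exactly $2p$ elements (as noted, since the only small orbits are excluded by primality), and on it the summand equals its value $\pi(\boldsymbol{a}_l)(\boldsymbol{\omega}\cdot\boldsymbol{a}_l)^{2kp+p}$ at the chosen representative.

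Assembling the pieces, each full orbit $A_l$ contributes $2p$ equal copies of $\pi(\boldsymbol{a}_l)(\boldsymbol{\omega}\cdot\boldsymbol{a}_l)^{2kp+p}$, so
\[
\alpha_{k}^{(p)} = \frac{1}{2^{p}}\cdot 2p \sum_{l=1}^{m}\pi(\boldsymbol{a}_l)(\boldsymbol{\omega}\cdot\boldsymbol{a}_l)^{2kp+p}
= \frac{p}{2^{p-1}}\sum_{l=1}^{m}\pi(\boldsymbol{a}_l)(\boldsymbol{\omega}\cdot\boldsymbol{a}_l)^{2kp+p},
\]
which is exactly the claim. The main obstacle I anticipate is the careful treatment of the exceptional size-$2$ orbit: I need to argue cleanly both that it contributes nothing to $\alpha_k^{(p)}$ and that \emph{no other} orbits of size $1$, $2$, or $p$ occur, so that every $A_l$ genuinely has cardinality $2p$ and the counting $2^{p-1}-1 = pm$ is exact. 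The orbit-size divisibility comes from $c^{2p}=1$ (orbit sizes divide $2p$, hence lie in $\{1,2,p,2p\}$), and ruling out sizes $1$ and $p$ uses primality together with the explicit sign-flip structure of $c$; this is the step I would write out in full, the rest being the routine bookkeeping carried out above.
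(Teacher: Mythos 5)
Your strategy is the same as the paper's---decompose $S(p)$ into $C_{2p}$-orbits, prove that the summand $\pi(\boldsymbol{s})\,(\boldsymbol{\omega}\cdot\boldsymbol{s})^{2kp+p}$ is constant on each orbit, and collect $2p$ equal terms per full orbit---and your invariance computation ($\boldsymbol{\omega}\cdot(c\boldsymbol{s})=\omega\,(\boldsymbol{\omega}\cdot\boldsymbol{s})$, $\pi(c\boldsymbol{s})=-\pi(\boldsymbol{s})$, $\omega^{2kp+p}=-1$) is exactly the one underlying the paper's proof. The genuine gap is in the step you yourself flag as delicate: the exceptional size-$2$ orbit, where your argument fails on two counts. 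First, that orbit does not contain $(1,\ldots,1)$: solving $c^{2}\boldsymbol{s}=\boldsymbol{s}$ shows that the only size-$2$ orbit is the pair of alternating vectors $\pm(1,-1,1,\ldots,-1,1)$ (the set displayed in the paper just before the proposition contains a typo; Lemma \ref{1} has the correct pair). The all-ones vector lies in a genuine size-$2p$ orbit with $\boldsymbol{\omega}\cdot(1,\ldots,1)=\frac{2}{1-\omega}\neq 0$; it is one of the $m$ orbits that must be \emph{kept} (for $p=7$ it is the orbit of radius $\approx 4.494$ in the paper's table), so literally discarding it, as your argument would do, makes the claimed identity false. Second, your proposed cancellation mechanism (``the two elements of that orbit \ldots contribute with opposite signs'') contradicts the invariance you just established: the summand is constant on orbits, so the two terms of any size-$2$ orbit are \emph{equal}, and they can only sum to zero if each one vanishes.

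That is in fact what happens, for a reason your proposal never states: for the alternating vector,
\[
\boldsymbol{\omega}\cdot\boldsymbol{s}=\sum_{j=0}^{p-1}(-\omega)^{j}=\frac{1-(-\omega)^{p}}{1+\omega}=0,
\]
since $(-\omega)^{p}=(-1)^{p}\omega^{p}=1$ for $p$ odd; hence each of the two exceptional terms equals $\pi(\boldsymbol{s})\cdot 0^{\,2kp+p}=0$. Equivalently, this is the content of the paper's Lemma \ref{1}, which your proof never invokes: the two alternating vectors have the same dot product with $\boldsymbol{\omega}$, and being negatives of each other, that common value must be the origin (the ``multiplicity $2$'' remark in the paper). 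With this one step repaired---identify the size-$2$ orbit correctly and note that its image is $0$---the rest of your bookkeeping (orbit sizes divide $2p$, no orbits of size $1$ or $p$, $2+2pm=2^{p}$) goes through and coincides with the paper's proof.
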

Before proving the proposition, we prove that elements in $A_i$ correspond to distinct complex numbers upon taking dot product with $\boldsymbol{\omega}$. 


\begin{lemma}\label{1}
    Let $p\geq 3$ be a prime and let $\boldsymbol{r}, \boldsymbol{s} \in S(p)$. Then  
\[
\boldsymbol{r} \cdot \boldsymbol{\omega} = \boldsymbol{s} \cdot \boldsymbol{\omega}\]
if and only if
\[
\boldsymbol{r}=\boldsymbol{s}
\]
or
\[\boldsymbol{r},\boldsymbol{s} \in \{(1,-1,\ldots,1), (-1,1,\ldots,-1)\}.\]
\end{lemma}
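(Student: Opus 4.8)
The plan is to translate the equality of dot products into the vanishing of a low-degree integer polynomial at $\omega$, and then exploit that $\omega$ is an algebraic number of degree exactly $p-1$.

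First I would set $t_i = \tfrac12(r_i - s_i)$ for $1 \le i \le p$. Since $r_i, s_i \in \{-1,+1\}$, each $t_i$ lies in $\{-1,0,1\}$, and the hypothesis $\boldsymbol r\cdot\boldsymbol\omega = \boldsymbol s\cdot\boldsymbol\omega$ is equivalent to $P(\omega)=0$, where $P(x) = \sum_{i=1}^{p} t_i\, x^{i-1}$ is a polynomial of degree at most $p-1$ with coefficients in $\{-1,0,1\}$. Thus the lemma reduces to the statement that the only such polynomials vanishing at $\omega$ are $P=0$ and $P = \pm\Phi_{2p}$, where $\Phi_{2p}$ denotes the $2p$-th cyclotomic polynomial.

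Next I would bring in the arithmetic of $\omega = e^{\imath\pi/p}$. Since $\omega$ is a primitive $2p$-th root of unity, its minimal polynomial over $\mathbb{Q}$ is $\Phi_{2p}$, of degree $\varphi(2p) = p-1$. Because $P(\omega) = 0$ and $\deg P \le p-1 = \deg \Phi_{2p}$, the minimality of $\Phi_{2p}$ forces $P = c\,\Phi_{2p}$ for some $c \in \mathbb{Q}$. For an odd prime $p$ one has the explicit form $\Phi_{2p}(x) = \sum_{i=0}^{p-1} (-1)^i x^i$, so all of its coefficients equal $\pm 1$; comparing with the constraint $t_i \in \{-1,0,1\}$ gives $c \in \{-1,0,1\}$.

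Finally I would read off the three cases. If $c=0$ then every $t_i = 0$, i.e. $\boldsymbol r = \boldsymbol s$. If $c = 1$ then $t_i = (-1)^{i-1}$, meaning $r_i - s_i$ alternates between $+2$ and $-2$; hence $\boldsymbol r = (1,-1,\ldots,1)$ and $\boldsymbol s = (-1,1,\ldots,-1)$, the two alternating sign vectors, and $c=-1$ yields the same pair with the roles of $\boldsymbol r$ and $\boldsymbol s$ swapped. The converse direction, that these alternating vectors really do satisfy $\boldsymbol r\cdot\boldsymbol\omega = \boldsymbol s\cdot\boldsymbol\omega$, is immediate from $\Phi_{2p}(\omega)=0$. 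The only genuinely delicate point is the degree bookkeeping: the whole argument hinges on the coincidence $\deg \Phi_{2p} = p-1 = \deg P$, which is what promotes divisibility of $P$ by $\Phi_{2p}$ to $P$ being a scalar multiple of $\Phi_{2p}$. Primality of $p$ enters precisely here, guaranteeing both $\varphi(2p)=p-1$ and the clean alternating shape of $\Phi_{2p}$.
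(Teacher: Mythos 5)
Your proposal is correct and follows essentially the same route as the paper: both arguments reduce the equality of dot products to the vanishing at $\omega$ of a polynomial of degree at most $p-1$ with bounded coefficients, then invoke the fact that the minimal polynomial of $e^{\imath\pi/p}$ is $\sum_{i=0}^{p-1}(-1)^i x^i$ of degree exactly $p-1$, forcing the difference to be a scalar multiple of it. Your version simply spells out the details the paper compresses (the identification with $\Phi_{2p}$, the degree bookkeeping, and the case analysis on the scalar $c$), which is where primality of $p$ genuinely enters.
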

\begin{proof}
    Suppose $(r_1,\ldots,r_p) \cdot \boldsymbol{\omega} = (s_1,\ldots,s_p)\cdot\boldsymbol{\omega}$. Then notice that $\sum_{i=0}^{p-1} (r_i-c_i)\omega^i =0$ and that $r_i - c_i \in \{-2,0,2\}$. Since the minimal polynomial of $e^{\frac{\imath \pi}{p}}$ is $\sum_{i=0}^{p-1} (-1)^i x^{i}$, we have $r_i - c_i = (-1)^i b$ for $b \in \{-2,0,2\}$. The desired claim follows.
\end{proof}
In other words, unlike in the case of $6$-interval formula, the only overlapping points in the lattice, when $N$ is a prime, are at the origin with multiplicity $2$. We now prove Proposition \ref{orbit}.
\begin{proof}
Lemma \ref{1} combined with orbit decomposition yields 
\begin{align*}
    \alpha_{k}^{(p)} = \frac{1}{2^p}\sum_{\boldsymbol{s} \in S(p)} \pi(s) (\boldsymbol{\omega} \cdot \boldsymbol{s})^{2kp+p} =  \frac{1}{2^p}\sum_{j=1}^{\frac{2^p-1}{p}} \sum_{\boldsymbol{a} \in A_j} \pi(\boldsymbol{a})\,(\boldsymbol{\omega}\cdot \boldsymbol{a})^{2kp+p}.
\end{align*}
Since $A_n = \{a_n \omega^{i}\, \vert\, 0 \leq i \leq 2p-1\}$, there holds 
\begin{align*}
    \sum_{\boldsymbol{a} \in A_n} \pi(a)\,(\boldsymbol{\omega}\cdot \boldsymbol{a})^{2kp+p}
    &= \pi(\boldsymbol{a}_n)\,(\boldsymbol{\omega}\cdot \boldsymbol{a}_n)^{2kp+p}
       \sum_{m=0}^{2p-1} (-1)^m\,\omega^{m(2kp+p)} = 2p \pi(\boldsymbol{a}_n)\,(\boldsymbol{\omega}\cdot \boldsymbol{a}_n)^{2kp+p}.
\end{align*}
Substituting this in the previous formula gives the desired result.

\end{proof}

\begin{ex}
    Let $p=7$. The set of points $\boldsymbol{\omega} \cdot S(7) = \{\boldsymbol{\omega} \cdot s \mid s \in S(7)\}$ is shown in figure \ref{7-interval}.
The number of distinct orbits is $\frac{2^7-1}{7} = 9$.
The $9$ possible radii  $\lvert \boldsymbol{\omega}\!\cdot\! \boldsymbol{s}\rvert$ are shown in the third column of the table below. 
\begin{table}[h]
\centering
\begin{tabular}{c c c c c}
\toprule
\# & Representative $\boldsymbol{s}$ & $\arg(\boldsymbol{\omega}\!\cdot\!\boldsymbol{s})$
   & $\lvert \boldsymbol{\omega}\!\cdot\!\boldsymbol{s}\rvert$
   & $\pi(\boldsymbol{s})\,(\boldsymbol{\omega}\!\cdot\!\boldsymbol{s})$ \\
\midrule
1 & $(+1,\,-1,\,+1,\,+1,\,-1,\,-1,\,+1)$ & $0$        & 0.890084 & $-0.890084\;+\,0.000000\,i$ \\
2 & $(+1,\,+1,\,-1,\,-1,\,+1,\,+1,\,-1)$ & $0$        & 1.109916 & $-1.109916\;+\,0.000000\,i$ \\
3 & $(-1,\,+1,\,+1,\,-1,\,+1,\,-1,\,-1)$ & $0$        & 1.603875 & $\phantom{-}1.603875\;+\,0.000000\,i$ \\
4 & $(+1,\,+1,\,-1,\,+1,\,-1,\,+1,\,-1)$ & $0$        & 2.000000 & $-2.000000\;+\,0.000000\,i$ \\
5 & $(-1,\,+1,\,+1,\,+1,\,-1,\,-1,\,-1)$ & $0$        & 2.493959 & $\phantom{-}2.493959\;+\,0.000000\,i$ \\
6 & $(+1,\,+1,\,-1,\,+1,\,+1,\,-1,\,-1)$ & $0.136968$ & 2.828427 & $-2.801938\;-\,0.386193\,i$ \\
7 & $(+1,\,+1,\,+1,\,+1,\,-1,\,-1,\,+1)$ & $0.311831$ & 2.828427 & $\phantom{-}2.692021\;+\,0.867767\,i$ \\
8 & $(+1,\,+1,\,+1,\,-1,\,+1,\,-1,\,-1)$ & $0$        & 3.603875 & $-3.603875\;+\,0.000000\,i$ \\
9 & $(+1,\,+1,\,+1,\,+1,\,-1,\,-1,\,-1)$ & $0$        & 4.493959 & $-4.493959\;+\,0.000000\,i$ \\
\bottomrule
\end{tabular}
\caption{Minimal–argument representatives for each length-$14$ orbit of $C_{14}$ acting on $S(7)$, with $\omega=e^{i\pi/7}$.}
\end{table}

\begin{figure}[h]
\label{7-interval}
\centering
\includegraphics[scale=0.6]{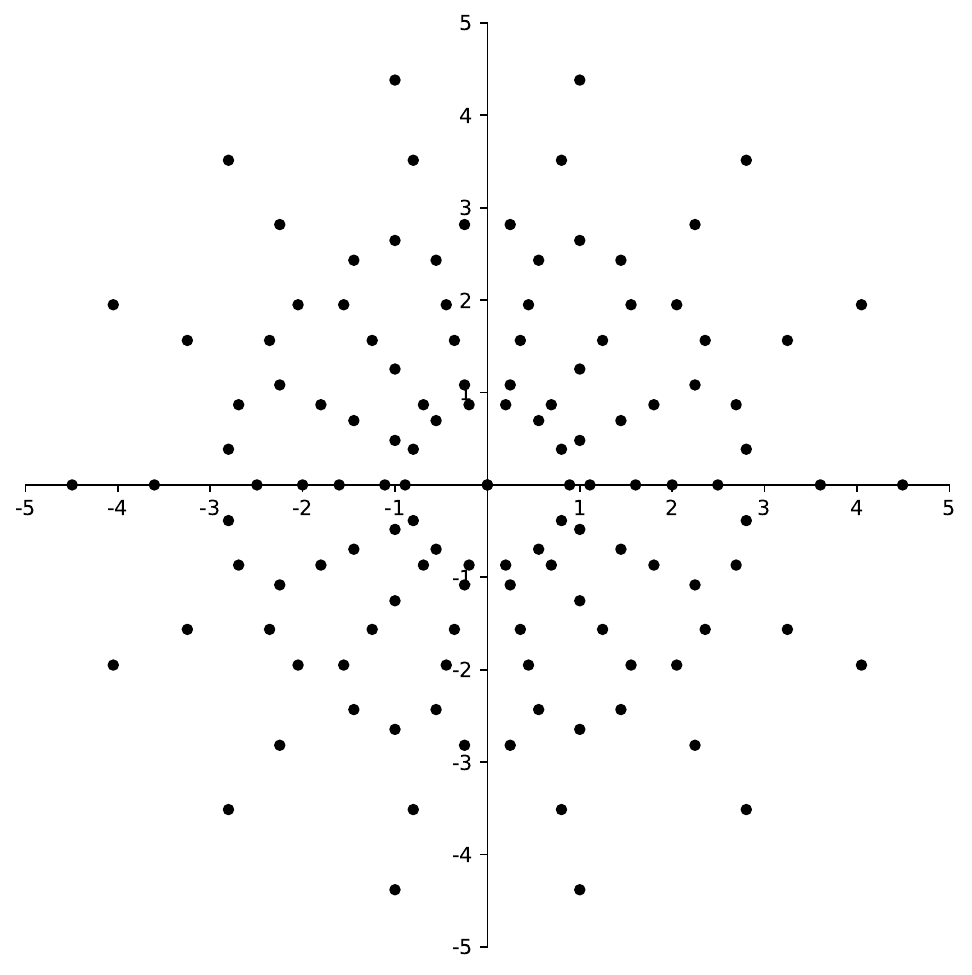} 
\caption{Lattice corresponding to $\boldsymbol{\omega}\cdot S(7)$.}
\label{Fig1}
\end{figure}
\end{ex}

\section{A probabilistic interpretation}
Ramanujan's lacunary identity for Bernoulli numbers can be given a simple probabilistic interpretation, although it is unlikely that Ramanujan considered it under this angle.
\subsection{hyperbolic secant and uniform distributions}
The Bernoulli numbers are the moments 
\[
B_{n}=\mathbb{E}\mathfrak{B}^{n},\,\,n\ge 0,
\]
of a complex-valued random variable
\[
\mathfrak{B}=\imath\mathfrak{L}-\frac{1}{2},
\]
the random variable $\mathfrak{L}$ following a square hyperbolic
secant distribution with density
\[
f_{\mathfrak{L}}=\pi\sech^{2}\left(\pi x\right).
\]
This means that we have the integral representation \cite[p.75]{Norlund}
\[
B_n = \pi \int_{-\infty}^{\infty} \left( \imath x-\frac{1}{2} \right)^n \sech^{2}\left(\pi x\right) dx.
\]
This representation was known to Ramanujan \footnote{in Ramanujan's notation, $B_n$ is the Bernoulli number with index $2n$.} under the equivalent form
\cite[Eq. 26]{Collected}
\[
\int_{0}^{\infty}\frac{x^{n}}{\left(e^{\pi x}-e^{-\pi x}\right)^{2}}dx=\frac{B_{2n}}{4\pi}.
\]
Moreover, the moment generating function of the random variable $\mathfrak{B}$
is
\[
\varphi_{\mathfrak{B}}\left(z\right)=\mathbb{E}e^{z\mathfrak{B}}=\frac{z}{e^{z}-1}.
\]
Similarly, a random variable $\mathfrak{U}$ uniformly distributed
over $\left[0,1\right]$ has moments
\[
\mathbb{E\mathfrak{U}}^{n}=\int_{0}^{1}x^n dx = \frac{1}{n+1}
\]
and moment generating function
\[
\varphi_{\mathfrak{U}}\left(z\right)=\mathbb{E}e^{z\mathfrak{U}}=\frac{e^{z}-1}{z}.
\]
The property
\[
\varphi_{\mathfrak{U}}\left(z\right)\varphi_{\mathfrak{B}}\left(z\right)=1,\,\,\forall z
\]
means that all the moments $\mathbb{E}\mathfrak{Z}^{n}$ of the complex-valued
random variable $\mathfrak{Z}\mathfrak{=B}+\mathfrak{U}$ vanish for $n\ge 1$:
\[
\mathbb{E}\left(\mathfrak{U}+\mathfrak{B}\right)^{n}=0,\,\,n\ge1.
\]
Symbolically, 
\[\mathfrak{U}+\mathfrak{B}=0,\]
a uniform random variable "cancels" a Bernoulli random variable, in the sense that all its non-zero integer moments are equal to $0.$

Now consider the multisection series, with $\omega=e^{\imath\frac{2\pi}{3}},$
\[
\sum_{n\ge0}\frac{B_{3n}}{\left(3n\right)!}z^{3n}=\frac{1}{3}\left[\varphi_{\mathfrak{B}}\left(z\right)+\varphi_{\mathfrak{B}}\left(\omega z\right)+\varphi_{\mathfrak{B}}\left(\omega^{2}z\right)\right].
\]
It can be viewed as the moment generating $\varphi_{\mathfrak{X}}\left(z\right)$  of a random variable $\mathfrak{X}$ defined as the product
\[
\mathfrak{X}=\Omega\mathfrak{B}
\]
where the complex-valued random variable $\Omega$ is independent of $\mathfrak{B}$ and uniformly distributed over the three values $\left\{ \omega^{j}\right\} _{0\le j\le2}$:
\[
\Pr\left\{ \Omega=1\right\} =\Pr\left\{ \Omega=\omega\right\} =\Pr\left\{ \Omega=\omega^{2}\right\} =\frac{1}{3}.
\]
\subsection{Ramanujan's multisection identity}
Ramanujan's lacunary identity can be interpreted probabilistically as follows: given
that a uniform random variable $\mathfrak{U}$ cancels a Bernoulli
$\mathfrak{B},$ how to cancel the more complicated random variable
$\mathfrak{X}=\Omega\mathfrak{B}$ that takes with equal probability the three values   $\mathfrak{B},$ $\omega \mathfrak{B}$ and $\omega^2 \mathfrak{B}$ ? 

Since $\mathfrak{U}$, $\omega\mathfrak{U}$ and
$\omega^{2}\mathfrak{U}$ would respectively cancel each of these possible values $\mathfrak{B},$ $\omega \mathfrak{B}$ and $\omega^2 \mathfrak{B}$
of $\mathfrak{X}$, the sum
\[
\mathfrak{Y}=\mathfrak{U}_{0}+\omega\mathfrak{U}_{1}+\omega^{2}\mathfrak{U}_{2}
\]
will cancel the Bernoulli part in $\mathfrak{X}$ while introducing a residual term, the right-hand side in Ramanujan's identity.

At the level of generating functions, this is expressed as
\[
\varphi_{\mathfrak{X}}\left(z\right)\varphi_{\mathfrak{Y}}\left(z\right)=\frac{1}{3}\left[\varphi_{\mathfrak{B}}\left(z\right)+\varphi_{\mathfrak{B}}\left(\omega z\right)+\varphi_{\mathfrak{B}}\left(\omega^{2}z\right)\right]\varphi_{\mathfrak{U}_{0}}\left(z\right)\varphi_{\mathfrak{U}_{1}}\left(\omega z\right)\varphi_{\mathfrak{U}_{2}}\left(\omega^{2}z\right)
\]
\[
=\frac{1}{3}\left[\varphi_{\mathfrak{U}_{1}}\left(\omega z\right)\varphi_{\mathfrak{U}_{2}}\left(\omega^{2}z\right)+\varphi_{\mathfrak{U}_{0}}\left(z\right)\varphi_{\mathfrak{U}_{2}}\left(\omega^{2}z\right)+\varphi_{\mathfrak{U}_{0}}\left(z\right)\varphi_{\mathfrak{U}_{1}}\left(\omega z\right)\right].
\]
This sum is easily omputed as $\sum_{n\ge 0}b_{n}\frac{z^{6n}}{(6n)!}$ with $b_{n}=\frac{2}{(6n+1)(6n+2)},$
and  proves the lacunary formula 
\[
\sum_{k=0}^n \binom{6n+3}{6k}B_{6k} = 2n+1
\]
derived by Lehmer \cite{Lehmer}.



\end{document}